\newcommand{\rom}[1]{\uppercase\expandafter{\romannumeral #1\relax}}
\numberwithin{equation}{section}
\newtheorem{theorem}{Theorem}[section]
\newtheorem{lemma}[theorem]{Lemma}
\newtheorem{corollary}[theorem]{Corollary}
\newtheorem{definition}[theorem]{Definition}
\newtheorem{condition}[theorem]{Assumption}
\newtheorem{gallarynotation}[theorem]{Gallery of Notations}
\def\e{\epsilon}
\def\lam{\lambda}
\def\o{\omega}
\def\s{\sigma}
\def\O{\Omega}
\def\R{\mathbb R}
\def\A{\mathbb{A}}
\def\mL{\mathcal{L}}
\def\mN{\mathcal{N}}
\def\mF{\mathcal{F}}
\def\l{\left}
\def\r{\right}
\def\la{\left\langle}
\def\ra{\right\rangle}
\def\ll{\left\lVert}
\def\rl{\right\rVert}
\def\lv{\left\lvert}
\def\rv{\right\rvert}
\def\({\left(}
\def\){\right)}
\def\[{\left[}
\def\]{\right]}
\def\pt{\partial}
\def\qd{\quad}
\def\sM{\sqrt{M}}
\def\koo{k_{00}}
\def\kol{k_{01}}
\def\klo{k_{10}}
\def\kll{k_{11}}
\def\A{A}
\def\B{B}
\def\Ax{A_x}
\def\Bx{B_x}
\title[Boundary Control of VFP Equations] 
      {Boundary Control of Vlasov--Fokker--Planck Equations}
\author[Michael Herty  and  Shi Jin and Yuhua Zhu ]{}
\subjclass{Primary: 58F15, 58F17; Secondary: 53C35.}
 \keywords{Lyapunov function, Feedback stabilization, Vlasov--Fokker--Planck Equations, Hypocoercivity}
 \email{herty@igpm.rwth-aachen.de}
 \email{shijin-m@sjtu.edu.cn}
 \email{yuhuazhu@stanford.edu}
\thanks{}
\begin{document}
\maketitle

\centerline{\scshape Michael Herty}
\medskip
{\footnotesize
 \centerline{Department of Mathematics at RWTH Aachen University}
   \centerline{Templergraben 55}
   \centerline{ Aachen, 52064,  GERMANY.}
} 

\medskip

\centerline{\scshape Shi Jin}
\medskip
{\footnotesize
 \centerline{Institute of Natural Sciences, and MOE-LSC}
   \centerline{Shanghai Jiao Tong University}
   \centerline{Shanghai, 200240, China}
} 

\medskip

\centerline{\scshape Yuhua Zhu}
\medskip
{\footnotesize
 \centerline{Department of Mathematics at Stanford University}
   \centerline{450 Jane Stanford Way}
   \centerline{California, 94538, USA}
}
\bigskip


\begin{abstract}
We introduce a novel Lyapunov function for stabilization of linear Vlasov--Fokker--Planck type equations with stiff source term. Contrary to existing results relying on transport properties to obtain stabilization, we present results based on hypocoercivity analysis for the Fokker--Planck operator. The existing estimates are extended to derive suitable feedback boundary control to guarantee the exponential stabilization. Further, we study  the associated macroscopic limit  and derive  conditions on the feedback boundary control such that in the formal limit  no  boundary  layer exists. 
\end{abstract}

\section{Introduction.}
\label{sec: intro}
We are interested in stabilization of kinetic partial differential equations by boundary feedback laws and its associated macroscopic equations. As a prototype we consider the Vlasov--Fokker--Planck (VFP) equation with small Knudsen number and we derive suitable conditions on the boundary control to obtain damping of perturbations of the steady state exponentially fast in time. Kinetic partial differential equations belong to the class of  hyperbolic balance laws and enjoy the particular feature of linearity in transport direction. However, the studied class  usually has a (stiff) source term described by a  small Knudsen number. As the Knudsen number tends to zero, the solution to the VFP equation converges to  a solution  to (macroscopic) partial differential equations for mass and momentum \cite{poupaud2000parabolic}. We are interested in stabilization results for {\it all} ranges of Knudsen numbers.
\par 
The   theoretical and numerical discussion of  stabilization properties of hyperbolic balance laws has recently been of interest in the mathematical and engineering community and we refer
 to \cite{O1,O2} for a survey and more references. Typically, its application has been on the stabilization of flows (on networks) governed by shallow--water or isothermal equations ~\cite{G1,G2,G3,G4,W1,W2,W3,W4,W5,W6}.  The {\em core underlying tool} for the study of these problems are Lyapunov functions for  deviation from steady states in suitable norms, e.g.~$L^2_x$, $H^2_x$. A major breakthrough has been the design of suitable weighted Lyapunov functions that allow for an exponential decay (in time) of the such deviations. Exponential decay of a continuous Lyapunov function under the so-called \textit{dissipative} boundary conditions has been proven in \cite{L1,L2,L4,L5}. Comparisons to other stability  concepts are presented in \cite{L7}.
Stability with respect to the $H^2_x$-norm yields stability of the nonlinear system~\cite{L5,O1}. Recently, similar results for numerical schemes have been established \cite{D1,D2} and the theory has been extended to balance laws 
with mixed source terms \cite{HY}.  In all cases, the exponential decay is obtained as {\it interplay } of the weights of the Lyapunov function and the linear transport property of the underlying linearized hyperbolic systems. In addition, most analytical results do not state explicitly decay rates and the influence of the source term is assumed to be {\it small}~\cite{L1}. In particular, for kinetic equations with stiff forcing term the latter assumption is in general not fulfilled and it has been shown in \cite{HGY} that a direct extension of the Lyapunov function to a kinetic equation 
will {\em not } yield exponential decay. Therefore, we propose here a {\em novel } Lyapunov function $E_h$ for a particular class of VFP equations. This function has been introduced as energy norm for
 nonlinear Vlasov-Poisson-Fokker-Planck equation on the whole space without stiff source term in \cite{hwang2013vlasov} and we generalize the results to  our setting. 
This function    does not include particular weights, but mixed derivatives and the corresponding decay estimate are used to yield hypocoercivity estimates of the underlying differential operator. 
 The advantage of the novel Lyapunov function is the treatment of the case of small Knudsen numbers and even the  limit of vanishing Knudsen number. We give conditions on boundary feedback stabilization  in terms of the Knudsen number including  the limit.  A particular difficulty for the boundary problem of kinetic equations with small Knudsen number is the treatment of possible boundary layers. The stabilization in the interior of the domain  follows from hypercoercivity estimates. Related work on similar Lyapunov functionals have 
   been used also for results on  Uncertainty Quantification problem for Fokker-Planck related equation with small Knudsen number {\em but } only in the case of  periodic boundary conditions 
   and without stabilizing feedback conditions \cite{ZhuJin18, Zhu18HighDim}. 

This paper is organized as follows. We review the Vlasov-Fokker-Planck equation with feedback control in Section \ref{sec: set up}, and present our main results in Section \ref{sec: main results}. The proof of the main results are given in Section \ref{sec: proof}. The outline of the proof is given in Section \ref{sec: outline}. The proof is divided into three parts. In Section \ref{sec: boundary layer}, we give a sufficient condition such that the feedback control will not yield a boundary layer as the Knudsen number goes to zero. Then we derive an energy estimation for the Lyapunov functional in Section \ref{sec: energy est}. Based on the above two sections, in Section \ref{sec: boundary cond} we give sufficient condition on the boundary such that any perturbation will exponentially decay .\\

\newpage
\begin{gallarynotation}\label{notation}
\textup{
We introduce the following notation frequently used within the manuscript.  Let $t\geq0$ denote the temporal variable, 
 $\O= [0,1]\times\R$  be the phase space of space $x$ and velocity $v,$ respectively. We denote by $H^{-1}_x, L^2_x,  H^1_x, \dots$
 the Lebesgue- and Sobolev spaces in $x$ variable and analogously in $v.$ Further, we define  the following norms on  
 the phase space $\O:$ For suitable integrable functions $g=g(x,v)$ and $\sigma=\sigma(x)$ we 
introduce their corresponding $L^2-$ and higher--order  norms:
\begin{align*} 
 \ll g \rl^2 &:=\| g \|^2_{L^2_{x,v}} =  \int_\O g^2 \,dxdv,  \;  \ll \s \rl^2 := \| \s \|_{L^2_x}^2= \int_0^1 \s^2 dx, \\
 \ll g \rl_\o^2 &:= \ll g \rl^2 + \ll \pt_v g\rl^2 + \ll v g \rl^2, \\ 
 \ll g \rl_V^2 &:= \ll g \rl^2 + \ll \pt_x g\rl^2 \mbox{and}  \ll g \rl_{V,\o}^2 := \ll g \rl_\o^2 + \ll \pt_x g\rl_\o^2.
\end{align*}
We denote by $v \to M(v)$ the global Maxwellian  on $\R$  
\begin{align*} 
    M(v) &:=\frac{1}{\sqrt{2\pi}}e^{-\frac{|v|^2}{2}}.
\end{align*}
The  linearized Fokker--Planck collision operator  is given by  $\mL: H^1_{x,v} \to H^{-1}_{x,v}$,  
\begin{align}\label{def of mL}
    \la \mL g, h \ra= - \int_\Omega M \pt_v\(\frac{g}{\sM}\)  \pt_v\(\frac{h}{\sM}\) dx dv.
\end{align}
The operator $\mL$ satisfies the local coercivity property \cite{duan2010kinetic,ZhuJin18} for $ g=g(x,v) \in H^1_{x,v}$ and $\|g\|_\o<\infty$ and a constant $\lam=\frac14$:
\begin{align}
\label{coercivity}
    -\la \mL g, g \ra \geq \lam\ll (1-\Pi)g \rl^2_\o. \\
    { -\la \mL g, g \ra \geq \lam \(   \| \partial_v   (1-\Pi)g  \|^2 +   \|  v(1-\Pi)g  \|^2 -  \|  (1-\Pi)g  \|^2  \).  }
\end{align}
Here, we denote by $\la \cdot, \cdot \ra$  the $L^2$-scalar product in $x$ and $v$ and  
by $\Pi : L^2 \to \mN(\mL)$ a weighted projection operator onto the null space of $\mL$, i.e., for $g=g(x,v) \in L^2_{x,v}$,
\begin{align}\label{projection}
    \Pi g := \( \int_{\R} g\sM \,dv\) \sM.
\end{align}
For all $\s \in H^1_x$, by the Poincare inequality, there always exists constant $C\geq1$ such that
\begin{equation*}
    C\ll \pt_x\s \rl^2 \geq \ll \s - \int_0^1 \s dx \rl^2.
\end{equation*}
If one adds $\ll \pt_x\s \rl^2$ to both sides of the above inequality, then one has
\begin{equation}
    (C+1)\ll \pt_x\s \rl^2 \geq \ll \s - \int_0^1 \s dx \rl^2_V \implies \ll \pt_x\s \rl^2 \geq \frac{C_s}{2} \ll \s - \int_0^1 \s dx \rl^2_V,
    \label{sobolev const}
\end{equation}
for a constant $0 < C_s \leq 1$. \\
We define by  $h(t,v,x)$ the weighted kinetic distribution function 
\begin{equation}\label{eq: def h}
    h(t,v,x) := \frac{f(t,v,x)}{\sqrt{M(v)}}.
\end{equation}
Denote by $A(t), B(t), \Ax(t),\Bx(t)$  non-negative functions of time $t$ related to the boundary control of $h=h(t,x,v)$ at $x \in \{0,1\}$ 
\begin{equation}
    \label{def of ABCD}
    \begin{aligned}
    &\A(t)  := -\int_{-\infty}^0 \frac{v}{2} h^2(t,0,v)  dv, \qd \B(t) := \int_0^\infty  \frac{v}{2} h^2(t,1,v) dv,\\
    &\Ax(t) := -\int_{-\infty}^0 \frac{v}{2} (\pt_xh)^2(t,0,v)  dv,\qd \Bx(t) := \int^{\infty}_0 \frac{v}{2} (\pt_xh)^2(t,1,v)  dv.
    \end{aligned}
\end{equation}
The function $C_B(t)$  is defined by
\begin{equation}
    \label{def of C_B}
    \begin{aligned}
    &C_B(t) := \frac{\(\sqrt{A(t)} -\sqrt{B(t)} \)^2+\(\sqrt{\Ax(t)} -\sqrt{\Bx(t)} \)^2}{2(\sqrt{\A(t)} + \sqrt{\B(t)})(\sqrt{\Ax(t)} + \sqrt{\Bx(t)})}. 
    \end{aligned}
\end{equation}
The energy $E_h(t)$ at time $t$ and for some positive small parameter $\epsilon>0$ is defined by 
\begin{equation}
\label{eq: def E_h}
E_h(t) := \frac{1}{2}\ll h(t,\cdot,\cdot )\rl^2_V + \e a\la u(t,\cdot), \pt_x\s(t,\cdot) \ra,
\end{equation}
for any constant $a>0$, suitable functions $h,u,\s$  and where $\la \cdot,\cdot \ra$ denotes the $L^2-$scalar product in $x.$ 
We will establish  the estimate 
 \begin{align}
\lv\la u, \pt_x\s \ra\rv \leq \frac{1}{2} \(\ll u \rl^2 + \ll \pt_x\s\rl^2\) \leq \frac{1}{2} \| h(t,\cdot,\cdot)\| ^2_V.
 \end{align} 
Further, we establish that for each fixed time $t$, 
  $E_h(t)$ and $\ll h(t,\cdot,\cdot) \rl_V$ are two equivalent norms. This follows from the  inequalities
\begin{equation}
\label{eq: equiv norm}
\frac{1-a\e}{2}\ll h(t,\cdot,\cdot) \rl^2_V \leq E_h(t) \leq \frac{1+a\e}{2}\ll h(t,\cdot,\cdot) \rl^2_V.
\end{equation}
}
\end{gallarynotation}

\section{The linear Vlasov-Fokker-Planck (VFP) equation and the main result.}\label{sec2}

We are interested in boundary feeback stabilization of  equation \eqref{VFP}   in the sense of Definition \ref{def1} and its associated macroscopic
 limit at $\e \to 0$.
  Hence, we consider the case where the  dynamics given by \eqref{VFP} 
is stabilized at equilibrium $f^*=f^*(x,v)$ by suitable boundary controls \eqref{eq: general cond_f}.
 In order to derive  stabilization results we consider a  linear perturbation $\tilde{f}= f+ f^*$  of an equilibrium state $f^*$, i.e., we assume that $\partial_t f^*(t,\cdot,\cdot)=0.$
 
  Note that  $\tilde{f}$ is supposed to be a kinetic probability density and 
 therefore $$\int_{\O} \tilde{f} (t,x,v) dx dv = 1$$ for all $t\geq 0$ holds true.
This implies that perturbation $f$ is required to fulfill  $\int f(t,x,v) dxdv=0$.  Also, the
  perturbation $f$ fulfills  \eqref{VFP}-\eqref{eq: general cond_f}  due to the linearity of the Fokker--Planck operator and  boundary conditions. 
 
 Hence, we will focus our discussion  on the stabilization of \eqref{VFP} for a perturbation denoted by  $f=f(t,x,v)$. Note that the same model \eqref{VFP} also appears 
as formal first--order approximation to nonlinear VFP equations. However, we do not discuss the stabilization of nonlinear VFP equations in the following. 

\subsection{Definition of the problem.}
\label{sec: set up}
Consider the  linear Vlasov-Fokker-Planck equation for $f=f(t,x,v)$ 
\begin{align}
\e\pt_t f + v\pt_xf - E\pt_v f = \frac{1}{\e}\mF f, \qd (x,v) \in \O,
\label{VFP}
\end{align}
and initial data $f(0,x,v) = f_0(x,v)$. The linear Fokker Planck operator  $\mF$  is given by  
\begin{equation*}
    \mF f = \pt_v\(M\pt_v\(\frac{f}{M}\)\),
\end{equation*} 
where $M$ is the global Maxwellian.  The given bounded function $(t,x) \to E(t,x)$
 models the electric field. The mean free path (or Knudsen number) is $0 < \e\leq 1$. 
The equation is
accompanied by (feedback) boundary conditions specified by the matrix $K(\e)$ \eqref{def of K}: 
\begin{equation}
\label{eq: general cond_f}
\begin{cases}
    f(t,0,v)   = \koo(\e) f(t, 0, -v) +\klo(\e) f(t, 1, v), \qd v>0,  \\
    f(t, 1, v) = \kol(\e) f(t,0,v) + \kll(\e) f(t,1,-v)     ,\qd v<0, 
\end{cases}
\end{equation}
The  matrix $K(\e)$ is given by 
\begin{equation}
\label{def of K}
    K(\epsilon) = \begin{bmatrix}\koo(\e) \qd \klo (\e)   
    \\\kol(\e) \qd \kll(\e) \end{bmatrix}.
\end{equation}
We denote the limit of the coefficients  $K^0$ as $\epsilon$ tends to zero by  
\begin{equation}
    \label{limit}
    K^0 :=  \begin{bmatrix}\koo^0 \qd \klo^0   
    \\\kol^0\qd \kll^0 \end{bmatrix}: = \lim_{\e\to0}\begin{bmatrix}\koo(\e) \qd \klo (\e)  
    \\\kol(\e) \qd \kll(\e) \end{bmatrix}.
\end{equation}

The existence of the solution to the VFP equation in the whole space has been studied for example in \cite{degond1986global, mellet2007global}. However, results on bounded domains do not exist so far. 
	 In this paper, we assume there exists a  solution to the VFP equation \eqref{VFP}, \eqref{eq: general cond_f} 
	 in distributional sense and we assume in the following that $f$ has at least regularity   $f(t,\cdot,\cdot) \in H^1_{x,v}$.

\begin{definition} \label{def1} For fixed $\e>0$, we call the system \eqref{VFP}, \eqref{eq: general cond_f} exponentially stabilizable at equlibrium $f^*\equiv 0$,  if there exists a matrix $K(\e)$  
for any initial data $f_0 \in V$, $\int_\O f_0 dxdv=0$ and  $\| f_0 \|_V < \infty$, such that 
\begin{align*} \| f(t,\cdot,\cdot) \|_V \leq C_0 \| f_0 \|_V \exp( - C_1 t), \; t \geq 0,  \end{align*} 
for some non--negative constants $C_0$ and $C_1.$
\end{definition}
We briefly comment on the notion of stabilization that is an extension of the $L^2-$stabilization for hyperbolic balance laws \cite{L1}.  
The boundary condition \eqref{eq: general cond_f} is a state feedback determined by $K(\e).$  The equilibrium $f^*=0$ is a solution to the stationary Vlasov--Fokker-Planck equation for all $\epsilon$ and stabilization 
 ensures that $f \to f^*$ exponentially fast in time in the norm $\| \cdot \|_V.$ The energy $E_h$ defined by equation \eqref{eq: def E_h} plays
the role of the Lyapunov function in \cite{L1}. Note that contrary to the Lyapunov function,  {\bf no} exponential weights are required here. The dissipation is due to the 
hypercoercivity of the Fokker--Planck operators as shown in the main result of Theorem \ref{thm: general}. 
As indicated in the definition the choice of $K$ may depend on the value of $\e$ and so may  $C_0$ and $C_1,$ too.  An interesting question we discuss below is if the decay rate $C_1$ deteriorates as $\e$ becomes
smaller. We show that in fact, $C_1$ can be chosen {\it independently} of $\e$ provided that the coefficients
of $K(\e)$ fulfill additional conditions, see Section \ref{sec:3.2}.
\par  
In the following section we establish conditions on the operators and prove the  stabilization property in Theorem \ref{thm: general}. For the analysis it
turns out to be advantageous to discuss the properties of equation \eqref{VFP} in terms of the weighted kinetic distribution \eqref{eq: def h}, i.e.,
\begin{align} 
h(t,x,v)=\frac{f(t,x,v)}{\sqrt{M(v)}}, \qd\forall (x,v) \in \Omega, t \geq0.
\end{align} 
Provided that $f$ fulfills  \eqref{VFP} in distributional sense, a simple formal computation shows that $h=h(t,x,v)$ fulfills in this sense
\begin{align}
    \e\pt_th + v\pt_xh - \frac{1}{\e} \mL h = E\(\pt_v - \frac{v}{2}\)h, \qd (x,v) \in \O, t \geq 0.
    \label{micro eq}
\end{align}
The initial conditions and boundary conditions are 
\begin{align}\label{h IC}
h(0,x,v) = \frac{f_0(x,v)}{ \sqrt{M(v)}} \;  \forall (x,v) \in \Omega
\end{align}
and for all $t\geq0$ and for all $\epsilon$
\begin{equation}
\label{eq: general cond}
\begin{cases}
h(t,0,v)   = \koo(\e) h(t, 0, -v) +\klo(\e) h(t, 1, v), \qd v>0;  \\
h(t, 1, v) = \kol(\e) h(t,0,v) + \kll(\e) h(t,1,-v)     ,\qd v<0, 
\end{cases}
\end{equation}
respectively. The operator $\mL$  is defined by equation \eqref{def of mL}.  The set of equations \eqref{micro eq} -- \eqref{eq: general cond} are referred to the {\it{microscopic equations}} or kinetic equations,  since they describe the evolution of the  density function $h$.  For given function $h(t,x,v)$ the  density $\s=\s(t,x)$ and  flux $u=u(t,x)$ are
defined by  
\begin{align}
    \s(t,x) = \int_\R h(t,x,v)\sM \, dv = \int_\R f(t,x,v) dv, \\
     u(t,x) = \int_\R h(t,x,v) v\sM \, dv = \int_\R v f(t,x,v) dv  \label{eq: def s}.
\end{align}
The {\it macroscopic equations} describe the evolution of the density and flux.
Upon multiplication of the microscopic equation by $\sM$ and $v\sM$  and integration on $v$, we formally obtain macroscopic equations as evolution for $\s$ and $u$ respectively. In particular, we have
\begin{align}
\e\pt_t\s + \pt_xu = 0,
\label{continuity}
\end{align}
and for the flux we obtain 
\begin{align}
\e\pt_tu + \pt_x\s + \int v^2\sM (1-\Pi)\pt_xh dv + \frac{1}{\e}u = -E\s. 
\label{macro eq}
\end{align}
The latter equation is obtain by the following computation 
\begin{align}
0= &\int_\R \e \partial_t h v \sqrt{M} + v^2 \sqrt{M} \partial_x h  dv + \frac{1}{\e}  
  \int_\R M \partial_v( v )  \partial_v \left(\frac{h}{\sqrt{M}} \right) dv 
   - E \int_\R v \partial_v ( h \sqrt{M}) dv \nonumber\\
 = & \e \partial_t  u + \int v^2 \sqrt{M} \partial_x h  dv + \int_\R \frac{1}{\e}  \frac{h}{ \sqrt{M}} \partial_v  M dv +
 E \int_\R h \sqrt{M} dv \nonumber\\
   =&  \e \partial_t  u +   \int v^2 \sqrt{M} \partial_x h  dv + \frac{1}{\e} \int_\R  h \sqrt{M} v dv 
   + E \sigma \nonumber\\
   = & \e \partial_t  u +   \int v^2 \sqrt{M} \partial_x (1-\Pi)h  + \partial_x \sigma + \frac{1}{\e} u + E \sigma. 
\end{align}
The last line follows due to the definition of $\Pi$ in equation \eqref{projection}, i.e., 
\begin{align}
\Pi h = \int h \sqrt{M} dv \sqrt{M} = \sigma \sqrt{M}
\end{align}
and
\begin{align}
\int_\R v^2 \sqrt{M} \partial_x \sigma \sqrt{M} dv = \partial_x \sigma.
\end{align}
Since by  definition of $\Pi$ and $\s$ we obtain that  the operators $\Pi$ and $1-\Pi$ are perpendicular to each other under $L^2_{x,v}$, that is,
\begin{align}
    \ll h\rl^2 = \ll \Pi h \rl^2 + \ll (1-\Pi) h\rl^2 = \ll \s \rl^2 + \ll (1-\Pi) h\rl^2.
\end{align}
The first equality holds true due to 
\begin{align}
\int_\Omega (\Pi h)(1-\Pi)h dx dv = \int_\Omega \sigma \sqrt{M} ( h - \sigma \sqrt{M} ) dxdv = 0, \\
\int_\Omega (\Pi h) dxdv=\int \sigma^2 \(\int_\R M dv\) dx = \ll \sigma \rl.
\end{align}

\subsection{Theoretical  results.}
\label{sec: main results}
In this section we present conditions on the matrix $K(\epsilon)$ and the external field  $E$, such that the dynamics given by equation \eqref{VFP} is stabilizable in the sense of Definition \ref{def1}. 

For the external electric field we assume sufficient regularity and growth conditions: 
\begin{condition}
	\label{cond: E}
	The electric field $E: \R^+_0 \times [0,1]\to \R$ is assumed to be sufficiently smooth, for all time $t \in \R^+_0$, and its derivatives are  bounded by the following constants independent of $t$:
	\begin{equation}
	\label{bdd of E}
	\ll E(t,\cdot) \rl_{L^\infty_x}, \ll \pt_x E(t,\cdot) \rl_{L^\infty_x}, \ll \pt_x^3 E(t,\cdot) \rl_{L^\infty_x} \leq \frac{C_E}{2} \leq \frac{\lam C_s}{16},
	\end{equation}
	where $\lam, C_s$ are constants defined in \eqref{coercivity}, \eqref{sobolev const}. In addition, at the boundary $x\in \{0,1\}$, we assume that the electric field and its second derivative are periodic:
	\begin{equation}
	\label{bdy of E}
	E(t,0) = E(t,1) = 0, \qd \pt_x^2E(t,0) = \pt_x^2E(t,1).  
	\end{equation}
\end{condition}

As discussed in Section \ref{sec2}  the perturbations $f$ and $f_0$ of the steady state are required to have zero mean. 
\begin{condition}
\label{ass initial}
	Assume for initial data $f_0(\cdot,\cdot) \in V$,
	\begin{equation}
	\int f_0(x,v) \,dx\,dv = 0.	\label{ass: zero mass}	
	\end{equation}
\end{condition}

Next, we present exponential stability results in the case of small and large electric field governed by $C_E$ 
as well as  in the limit case $\e=0.$ The latter corresponds to stabilization of the formal hydrodynamic limit. 
We refer to the remarks below the theorems for some discussion as well as to Lemma \ref{lem01} for details
on the imposed assumptions.  The next Theorem states the exponential stability for solutions with small electric field. 
\begin{theorem}
	\label{thm: general}
	Assumptions \ref{cond: E} and \ref{ass initial} hold true and let $\e >0$.  Further,  assume that  the upper bound of the electric field $C_E$ given by  equation \eqref{bdd of E} 
	and the coefficients of the matrix $K(\epsilon)$ given by equation \eqref{def of K} satisfy 
	\begin{equation} \label{cond_mh1} 
	\begin{aligned}
	&C_E  < \frac{3C_s}{2}-\frac{3C_s}{2+ C_B(t)},\;  \\
	&0\leq \kll(\epsilon) = \koo(\epsilon) \leq 1, \; 
	\kol(\e) = \klo(\e) = 1-\koo(\e),
	\end{aligned}
	\end{equation}
	
Then, any weak solution $f\in C^0(\R_+^0; H^1_{x,v})$   to (\ref{VFP}) with boundary condition \eqref{eq: general cond_f}  will decay exponentially in time to zero with rate given by
	\begin{equation}
	\label{eq: general_decay}
	\ll \frac{f(t)}{\sM} \rl_V^2 \leq \frac54\ll \frac{f(0)}{\sM}\rl_V^2e^{-2\xi t},
	\end{equation}
	where 
	\begin{equation}
	\label{eq: def of xi}
	\begin{aligned}
	&\xi = \min\l\{ \frac{\lam-C_E-4a}{\e^2},  \frac{a(3C_s-2C_E)-4C_E}{8} \r\}>0.
	\end{aligned}
	\end{equation}
	Here,  $\frac{4C_E}{3C_s-2C_E} < a < \min\l\{C_B(t), \frac{\lam - C_E}{4}\r\}$ and the functions $C_B(t), C_s, \lam$ are defined in equations (\ref{def of C_B}), (\ref{sobolev const}) 
	and (\ref{coercivity}), respectively.	
\end{theorem}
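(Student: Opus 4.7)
The plan is to prove exponential decay of the equivalent energy $E_h(t)$ (cf.\ \eqref{eq: equiv norm}) and transfer the conclusion to $\|h\|_V^2 = \|f/\sqrt{M}\|_V^2$. First, I would differentiate $E_h(t)$ in time, splitting the calculation into three pieces:
\begin{equation*}
\tfrac{d}{dt}E_h = \underbrace{\tfrac{1}{2}\tfrac{d}{dt}\|h\|^2}_{(I)} + \underbrace{\tfrac{1}{2}\tfrac{d}{dt}\|\partial_xh\|^2}_{(II)} + \underbrace{\varepsilon a\,\tfrac{d}{dt}\langle u,\partial_x\sigma\rangle}_{(III)}.
\end{equation*}
For $(I)$ I would use the microscopic equation \eqref{micro eq}, integrate the transport term by parts in $x$, and apply the coercivity \eqref{coercivity} to the dissipation term, producing $-\tfrac{\lambda}{\varepsilon^2}\|(1-\Pi)h\|_\omega^2$ plus a quadratic boundary contribution involving $h(t,0,v)$ and $h(t,1,v)$, plus an electric-field term which, thanks to Assumption \ref{cond: E} and $E(t,0)=E(t,1)=0$, can be absorbed into $C_E\|h\|^2$. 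Part $(II)$ requires commuting $\partial_x$ with the full operator; since $\mathcal{L}$ does not contain $x$ the commutator only produces $(\partial_x E)(\partial_v-v/2)h$, which is again controlled by $C_E$, and the coercivity of $\mathcal{L}$ applied to $(1-\Pi)\partial_x h$ yields the $\partial_x$-dissipation. A boundary term analogous to the one in $(I)$, but built from $\partial_x h$, arises after integration by parts; these two boundary contributions combine into exactly $A(t),B(t),A_x(t),B_x(t)$ from \eqref{def of ABCD}.

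For the crucial mixed term $(III)$, I would use the macroscopic equations \eqref{continuity}--\eqref{macro eq}: $\varepsilon\partial_t u=-\partial_x\sigma-\int v^2\sqrt{M}(1-\Pi)\partial_x h\,dv-\tfrac{1}{\varepsilon}u-E\sigma$ and $\varepsilon\partial_t\partial_x\sigma=-\partial_x^2 u$. Pairing these with $\partial_x\sigma$ and $u$ respectively gives, after one further integration by parts and using the periodicity $E(t,0)=E(t,1)=0$, the decisive gain $-a\|\partial_x\sigma\|^2$, which via the Poincaré inequality \eqref{sobolev const} and Assumption \ref{ass initial} controls $\|\sigma\|_V^2$ up to a constant $C_s/2$. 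The cross terms $\langle u,\partial_x(\text{flux})\rangle$ and $\langle E\sigma,\partial_x\sigma\rangle$ are bounded by $\tfrac{1}{\varepsilon}\|u\|^2+\|(1-\Pi)\partial_x h\|^2$ plus $C_E$ contributions, the first of which is absorbed into the already available $-\tfrac{\lambda}{\varepsilon^2}\|(1-\Pi)h\|_\omega^2$ provided $a<(\lambda-C_E)/4$, and the latter into $-a\|\partial_x\sigma\|^2$ provided $a>4C_E/(3C_s-2C_E)$. These two constraints are compatible precisely under the first inequality in \eqref{cond_mh1}, which is where the range for $a$ comes from.

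The remaining task is to show that the collected boundary contributions are non-positive under the assumed structure of $K(\varepsilon)$. After substituting $h(t,0,v)=k_{00}h(t,0,-v)+k_{10}h(t,1,v)$ for $v>0$ and the corresponding identity at $x=1$ into $A(t)-B(t)$ (and likewise for $A_x, B_x$), the constraints $k_{00}=k_{11}$ and $k_{01}=k_{10}=1-k_{00}$ together with $0\le k_{00}\le 1$ should make the boundary contribution dominated by $-2C_B(t)\cdot(\text{something non-negative})$, so that choosing $a<C_B(t)$ guarantees full absorption; this is where the definition \eqref{def of C_B} of $C_B(t)$ enters. Combining $(I)+(II)+(III)$ then yields $\tfrac{d}{dt}E_h \le -2\xi\,\|h\|_V^2$ with $\xi$ as in \eqref{eq: def of xi}, and the norm equivalence \eqref{eq: equiv norm} upgrades this to $\tfrac{d}{dt}E_h\le -2\xi\,\tfrac{2}{1+a\varepsilon}E_h$; Gronwall and \eqref{eq: equiv norm} once more give the stated decay with the constant $5/4$ arising from the ratio $(1+a\varepsilon)/(1-a\varepsilon)$ after choosing $\varepsilon,a$ small enough.

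The main obstacle I expect is the boundary calculation: reshaping the four quadratic boundary terms $A,B,A_x,B_x$ under the feedback relations into something manifestly signed against $C_B(t)$, while simultaneously keeping the bounds on $C_E$ compatible with the interval for $a$. The interior hypocoercivity part is by now standard in the spirit of \cite{hwang2013vlasov,ZhuJin18}; it is the interplay of the boundary matrix $K(\varepsilon)$ with the $\partial_x$-derivative terms and with the (singular-in-$\varepsilon$) hypocoercive cross term $\varepsilon a\langle u,\partial_x\sigma\rangle$ that constitutes the novel technical point.
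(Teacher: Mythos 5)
Your plan follows the paper's own route: the same Lyapunov functional $E_h$, the same three-way splitting into the $\|h\|^2$, $\|\pt_x h\|^2$ and $\epsilon a\la u,\pt_x\s\ra$ estimates (Lemma \ref{lemma: energy est}), the same absorption conditions giving $\frac{4C_E}{3C_s-2C_E}<a<\frac{\lam-C_E}{4}$, the same reduction of the boundary terms to the quantity compared with $C_B(t)$ (Lemmas \ref{lemma: negative RHS est} and \ref{lem01}), and the same Gronwall/norm-equivalence step. However, two steps you treat as automatic are dedicated lemmas in the paper, and as written your sketch would go wrong on the first. (a) The boundary relation satisfied by $\pt_x h$ is \emph{not} obtained by substituting the feedback law ``likewise'' into $A_x,B_x$: the feedback conditions hold only at the fixed points $x\in\{0,1\}$ and cannot be differentiated in $x$. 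The paper derives them in Lemma \ref{lemma: bdy pt_x} by evaluating the kinetic equation at the boundary and using $E(t,0)=E(t,1)=0$ from \eqref{bdy of E}, and the resulting relations carry a sign flip on the specular coefficients, e.g.\ $\pt_x h(t,0,v)=-k_{00}\,\pt_x h(t,0,-v)+k_{10}\,\pt_x h(t,1,v)$ for $v>0$. This sign is material in the estimate of the mixed boundary term $-a\l[u\pt_x u\r]_0^1$ (it is what produces the factor $(1+k_{11})$ rather than $(1-k_{11})$ in $\pt_x u$ at the boundary in \eqref{pf: upx_u}), so the quantity you would compare with $C_B(t)$ is not the correct one without this derivation.

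(b) Your appeal to Poincar\'e ``thanks to Assumption \ref{ass initial}'' presumes $\int_0^1\s(t,x)\,dx=0$ for all $t$, but the zero mean is only given at $t=0$: integrating the continuity equation \eqref{continuity} shows the mean at time $t$ equals $-\frac{1}{\epsilon}\int_0^t\(u(s,1)-u(s,0)\)ds$. One must therefore either carry the extra term $\frac{aC_s}{2\epsilon^2}\(\int_0^t(u(s,1)-u(s,0))\,ds\)^2$ through the estimate, as the paper does in \eqref{eq: energy est}, or first prove $u(t,1)=u(t,0)$, which is exactly what the row conditions $k_{00}+k_{01}=1$, $k_{10}+k_{11}=1$ deliver in Lemma \ref{lemma: bdy for u}. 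Since the structure of $K(\epsilon)$ you assume does imply both facts, your plan is repairable, but these two pieces of boundary bookkeeping are precisely the nontrivial content your sketch leaves out; the interior hypocoercivity part you describe is indeed the standard one.
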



 If the electric field is large, we  observe that we have  less degrees of freedom to chose a linear feedback boundary condition.  The following theorem states sufficient conditions on the feedback matrix $K(\e)$ for  exponential stability and possibly large electric fields.
\begin{theorem} 
		\label{thm: general limit}
	Assume Assumptions \ref{cond: E} and \ref{ass initial} hold true. Further,   assume that  the upper bound of the electric field $C_E$ given by  equation \eqref{bdd of E} 
	and the coefficients of the matrix $K(\epsilon)$ given by equation \eqref{def of K} satisfies, 
	\begin{equation}\label{cond_mh2}
C_E \geq \frac{3C_s}{2}-\frac{3C_s}{2+ C_B(t)}, \;  \kll(\e)= \koo(\e) = 0, \; \kol(\e) = \klo(\e) = 1.
\end{equation}
Then, any weak solution $f\in C^0(\R_+^0; H^1_{x,v})$   to (\ref{VFP}) with boundary condition \eqref{eq: general cond_f}  will decay exponentially in time to zero with rate given by
\begin{equation}
\label{eq: general_decay 2}
\ll \frac{f(t)}{\sM} \rl_V^2 \leq \frac54\ll \frac{f(0)}{\sM}\rl_V^2e^{-2\xi t},
\end{equation}
where for $\e>0$
	\begin{equation*}
\begin{aligned}
&\xi = \min\l\{ \frac{\lam-C_E-4a}{\e^2},  \frac{a(3C_s-2C_E)-4C_E}{8} \r\} > 0,
\end{aligned}
\end{equation*}
where $\frac{4C_E}{3C_s-2C_E} < a < \frac{\lam - C_E}{4}$  and the values $C_s, \lam$ are defined in equations (\ref{sobolev const}) and (\ref{coercivity}), respectively.
\end{theorem}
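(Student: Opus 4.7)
The plan is to use the same Lyapunov function $E_h(t)$ from \eqref{eq: def E_h} that is employed in Theorem \ref{thm: general}, but to exploit the simpler structure of the boundary conditions here. Observe that under the choice $\koo(\e) = \kll(\e) = 0$ and $\kol(\e) = \klo(\e) = 1$, the boundary conditions \eqref{eq: general cond} reduce to the pure periodic condition $h(t,0,v) = h(t,1,v)$ for all $v \in \R$. I would first work with the weighted distribution $h = f/\sM$ and its associated moments $\s, u$, then differentiate $E_h$ in time and assemble contributions from the microscopic equation \eqref{micro eq} and the macroscopic equations \eqref{continuity}, \eqref{macro eq}, aiming to establish the differential inequality $\frac{d}{dt} E_h(t) \leq -2\xi E_h(t)$.

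For the kinetic part $\frac{1}{2}\frac{d}{dt}\ll h\rl_V^2$, I would follow the same decomposition as in the proof of Theorem \ref{thm: general}: the hypocoercivity estimate \eqref{coercivity} provides dissipation of order $\lambda/\e^2$ on $(1-\Pi)h$, while the transport operator $v\pt_xh$ produces boundary fluxes of the form $\int_\R\tfrac{v}{2}(h^2(t,1,v)-h^2(t,0,v))\,dv$ together with the analogous expression for $\pt_x h$. The crucial simplification here is that under periodic boundary conditions these boundary fluxes vanish identically in a pointwise sense, so that the quantities $A(t), B(t), \Ax(t), \Bx(t)$ of \eqref{def of ABCD} play no restrictive role. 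This is precisely the reason why the upper bound $a < C_B(t)$ appearing in Theorem \ref{thm: general} disappears from \eqref{cond_mh2}, leaving only the constraint $a < (\lambda-C_E)/4$. The electric-field contribution $E(\pt_v - v/2)h$ would be handled by integration by parts, where the boundary conditions \eqref{bdy of E} with $E(t,0)=E(t,1)=0$ remove extraneous boundary terms, and the bound \eqref{bdd of E} together with the Poincar\'e inequality \eqref{sobolev const} allows its remaining volume contribution to be absorbed into the available dissipation.

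To handle the mixed term $\e a\la u,\pt_x\s\ra$ in $E_h$, I would differentiate in time and use \eqref{continuity}, \eqref{macro eq}, producing the central coercive term $-a\ll\pt_x\s\rl^2$ that supplies dissipation on the macroscopic component $\Pi h = \s\sM$. The remaining cross terms (involving $u$, $E\s$, and the non-equilibrium flux $\int v^2\sM(1-\Pi)\pt_xh\,dv$) are controlled through Cauchy--Schwarz, the $L^\infty$-bound on $E$, and the dissipation provided by hypocoercivity; once again, the periodic boundary conditions remove boundary fluxes arising from integration by parts in $x$. Assumption \ref{ass initial} is preserved in time, and together with \eqref{sobolev const} it gives control of $\ll \s \rl$ by $\ll \pt_x\s\rl$, which closes the estimate on the macroscopic piece.

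The hardest step is likely to verify that, in the regime of possibly large $C_E$ covered by \eqref{cond_mh2}, the cross contributions generated by the electric-field term together with those from $\la u,\pt_x\s\ra$ can still be absorbed into the hypocoercivity dissipation without the additional slack that a bound on $C_E$ relative to $C_B(t)$ would have provided; this is where the admissible window $\frac{4C_E}{3C_s - 2C_E} < a < \frac{\lambda - C_E}{4}$ must be shown to be nonempty and to make every remainder sign-definite. Once the differential inequality $\frac{d}{dt}E_h \leq -2\xi E_h$ is secured with the specified $\xi>0$, the equivalence \eqref{eq: equiv norm} together with $a\e < 1$ yields $\ll h(t)\rl_V^2 \leq \tfrac{1+a\e}{1-a\e}e^{-2\xi t}\ll h(0)\rl_V^2 \leq \tfrac{5}{4} e^{-2\xi t}\ll h(0)\rl_V^2$, which is \eqref{eq: general_decay 2}.
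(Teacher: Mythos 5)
Your overall strategy coincides with the paper's: the same Lyapunov function $E_h$ from \eqref{eq: def E_h}, hypocoercivity of $\mL$ for the microscopic part, the cross term $\e a\la u,\pt_x\s\ra$ together with the Poincar\'e inequality and the zero-mean condition for the macroscopic part, and finally Gronwall plus the norm equivalence \eqref{eq: equiv norm} with $a\e\leq\tfrac14$. Bypassing the general boundary estimate (Lemma \ref{lemma: negative RHS est}, Assumption \ref{const 3}, Lemma \ref{lem01}) by observing that $\koo=\kll=0$, $\kol=\klo=1$ turns \eqref{eq: general cond} into periodicity of $h$ is legitimate and, for this particular theorem, cleaner than the paper's route.

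However, there is a genuine gap in the claim that the boundary fluxes ``vanish identically in a pointwise sense.'' This is immediate for $\int_\R\tfrac v2\,(h^2(t,1,v)-h^2(t,0,v))\,dv$, but not for the flux involving $(\pt_xh)^2$ nor for the endpoint term $u\pt_xu$: the identity $h(t,0,v)=h(t,1,v)$ holds only at the two endpoints and does not by itself imply $\pt_xh(t,0,v)=\pt_xh(t,1,v)$. Since the decay is asserted in the $V$-norm, which contains $\|\pt_x h\|$, these terms must be controlled. The paper closes this step in Lemma \ref{lemma: bdy pt_x}: differentiating the boundary relation in $t$ and $v$, evaluating the kinetic equation \eqref{micro eq} at $x=0,1$, and using $E(t,0)=E(t,1)=0$ from \eqref{bdy of E}, one deduces that $\pt_xh$ satisfies \eqref{eq: general cond pt_xh}, which for $\koo=\kll=0$, $\kol=\klo=1$ is again periodicity; only then do the $(\pt_xh)^2$ flux, the term $u\pt_xu(t,1)-u\pt_xu(t,0)$, and (via $u(t,1)=u(t,0)$, needed also to preserve Assumption \ref{ass initial} in time) the term $\tfrac{aC_s}{2\e^2}\bigl(\int_0^t(u(s,1)-u(s,0))\,ds\bigr)^2$ drop out of the estimate of Lemma \ref{lemma: energy est}. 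Your sketch never derives this boundary condition for $\pt_xh$; moreover, the role you assign to $E(t,0)=E(t,1)=0$ (removing boundary terms in the integration by parts of the field term) is not where that assumption actually enters, since that integration by parts is in $v$ over the whole line and produces no boundary terms. Supplying this lemma-level step for $\pt_x h$ makes your argument complete and then essentially identical to the paper's proof.
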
 

The formal hydrodynamic limit is obtained for $\e \to 0.$ In this case both theorems yield the same
 condition on the matrix $lim_{e \to 0} K(\e) = K(0).$ Namely,  in the limit we obtain 
 $$ K(0)=\begin{pmatrix} 0 & 1 \\ 1 & 0 \end{pmatrix}.$$
This feedback matrix describes in fact periodic boundary conditions without additional damping. Hence, 
 exponential decay of solutions is guaranteed independent of $\e$ if periodic conditions are prescribed. However,
 the decay rate deteriorates. Hence, the previous results show that the only asymptotic preserving feedback boundary conditions are the periodic boundary conditions. The following lemma is a consequence of the previous 
 theorems. 
 
 \begin{lemma}
 	\label{thm: general2}
 	Let Assumptions \ref{cond: E} and \ref{ass initial} hold true and let $\e=0.$ Assume that 
 	 $$ K =\begin{pmatrix} 0 & 1 \\ 1 & 0 \end{pmatrix},$$
 	and  $E$ fulfills assumption \eqref{bdd of E}. Then, any weak solution $f\in C^0(\R_+^0; H^1_{x,v})$   to (\ref{VFP}) with boundary condition \eqref{eq: general cond_f}  will decay exponentially in time to zero with rate given by
 	\begin{equation}
 	\ll \frac{f(t)}{\sM} \rl_V^2 \leq \frac54\ll \frac{f(0)}{\sM}\rl_V^2e^{-2\xi t},
 	\end{equation}
 	where 
 	\begin{equation}
 	\xi =  \frac{a(3C_s-2C_E)-4C_E}{8}  >0.
 	\end{equation}
 	Here,  $\frac{4C_E}{3C_s-2C_E} < a < \min\l\{C_B(t), \frac{\lam - C_E}{4}\r\}$ and the functions $C_B(t), C_s, \lam$ are defined in equations (\ref{def of C_B}), (\ref{sobolev const}) 
 	and (\ref{coercivity}), respectively.	
 \end{lemma}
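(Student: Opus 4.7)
The plan is to obtain Lemma~\ref{thm: general2} as a direct specialization of Theorem~\ref{thm: general} by verifying its hypotheses for the constant periodic matrix $K=\begin{pmatrix}0&1\\1&0\end{pmatrix}$ and then passing to the limit $\e\to 0$. First I would check that setting $\koo(\e)\equiv\kll(\e)\equiv 0$ and $\klo(\e)\equiv\kol(\e)\equiv 1$ trivially fulfils the algebraic restrictions in \eqref{cond_mh1}, namely $0\leq\kll=\koo\leq 1$ and $\kol=\klo=1-\koo$. For the remaining size condition $C_E<\frac{3C_s}{2}-\frac{3C_s}{2+C_B(t)}$: if this holds, we apply Theorem~\ref{thm: general}; if it fails, the same matrix satisfies \eqref{cond_mh2} and one applies Theorem~\ref{thm: general limit} instead. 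Either way, for every $\e>0$ the hypotheses of one of the stability theorems are met with an $\e$-independent matrix.

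Next I would invoke the corresponding decay estimate \eqref{eq: general_decay} (or \eqref{eq: general_decay 2}), which yields
\begin{equation*}
\ll f(t)/\sM \rl_V^{\,2}\;\leq\;\tfrac{5}{4}\,\ll f(0)/\sM \rl_V^{\,2}\,e^{-2\xi(\e)\,t},\qquad
\xi(\e)=\min\!\left\{\frac{\lam-C_E-4a}{\e^{2}},\;\frac{a(3C_s-2C_E)-4C_E}{8}\right\}.
\end{equation*}
The key observation is that the admissible range $\frac{4C_E}{3C_s-2C_E}<a<\min\{C_B(t),\frac{\lam-C_E}{4}\}$ for the free parameter $a$ involves only the global quantities $C_E,C_s,\lam,C_B(t)$ and hence is \emph{independent of $\e$}. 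The same $a$ can therefore be chosen uniformly and the second term in the minimum is fixed, while the first term $\frac{\lam-C_E-4a}{\e^{2}}\to+\infty$ as $\e\to 0$ and so becomes inactive in the minimum. Passing to the limit leaves precisely
\begin{equation*}
\xi=\frac{a(3C_s-2C_E)-4C_E}{8}>0,
\end{equation*}
as asserted by the lemma, and the prefactor $5/4$ is preserved because the equivalence \eqref{eq: equiv norm} between $E_h$ and $\ll h\rl_V$ is also $\e$-independent.

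The main conceptual step I would have to justify carefully is the passage $\e\to 0$ itself, since \eqref{VFP} is formally singular in that limit. I would handle this by interpreting the lemma as a statement about the formal hydrodynamic limit: the Lyapunov identity $\frac{d}{dt}E_h(t)\leq-2\xi(\e)E_h(t)$ obtained in the proof of Theorem~\ref{thm: general} is a differential inequality whose only $\e$-dependent piece, after the choice of $a$ above, is the now-inactive first entry of the minimum. Thus, the inequality holds in the limit with the $\e=0$ rate, and integrating it against the $\e$-uniform equivalence \eqref{eq: equiv norm} gives \eqref{eq: general_decay} with the reduced $\xi$. I expect the bulk of the argument to be bookkeeping; the only substantive point is confirming that the periodic choice of $K$ makes all the boundary contributions appearing in the energy computation (encoded by $A,B,A_x,B_x$) cancel in the $E_h$ balance, which is an immediate consequence of $h(t,0,\cdot)=h(t,1,\cdot)$.
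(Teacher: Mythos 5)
Your proposal is correct and follows essentially the paper's own route: the paper likewise obtains Lemma \ref{thm: general2} as a ``simple consequence'' of Theorems \ref{thm: general} and \ref{thm: general limit}, using that the energy inequality $\pt_t E_h + \xi\ll h\rl_V^2\leq 0$ and the boundary lemmas hold uniformly for $0\leq\e\leq1$ with the periodic matrix, so that the $\e$-dependent term $\frac{\lam-C_E-4a}{\e^2}$ drops out of the minimum and only $\xi=\frac{a(3C_s-2C_E)-4C_E}{8}$ survives. Your verification that the periodic choice of $K$ annihilates all boundary contributions and that the admissible range of $a$ is $\e$-independent matches the paper's argument.
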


Theorems \eqref{thm: general} and \eqref{thm: general limit} will be proven in the forthcoming section.  Lemma \eqref{thm: general2} is a simple consequence. Prior to proceeding, some remarks are in order.
\begin{itemize}
	\item  The result guarantees stabilization exponentially fast in time provided that the size of the electric field is  suitable small. This case is covered by assumption \eqref{cond_mh1}.  In case of potentially large fields we may only stabilize the dynamics by periodic boundary conditions as given by equation \eqref{cond_mh2}. 
	\item Due to the linearity of the underlying equations we also obtain that the boundary condition \eqref{def of K} translates to a condition on $f^* + f$ where $f^*$ is any  equilibrium state. 
	\item The question whether we obtain a boundary layer for small values of $\e$ will be addressed in the beginning of Section \ref{sec: proof}.
	\item In all cases we have $\xi >0$, which is guaranteed by $\frac{4C_E}{2C_s - 2C_E} < a < \frac{\lam - C_E}{4}$ in both theorems. In Theorem \ref{thm: general limit}, the smallness of $C_E \leq \frac{\lam C_s}{8}$ in Assumption \ref{cond: E} guarantees that there always exists constants $a>0$ in the range of $(\frac{4C_E}{2C_s - 2C_E}, \frac{\lam - C_E}{4})$. In Theorem \ref{thm: general}, we further require $\frac{4C_E}{2C_s - 2C_E}  <a < C_B$, the existence of $a>0$ can be guaranteed by the additional assumption in \eqref{cond_mh1} that $C_E  < \frac{3C_s}{2}-\frac{3C_s}{2+ C_B(t)}$.
	\item Stated in terms of the boundary feedback matrix $K(\e)$ it is clear that condition \eqref{cond_mh2} is a particular case of condition \eqref{cond_mh1} and hence included in Theorem \ref{thm: general}. However, for positive value of $\e$, sufficiently small electric field $C_E$ and under the additional assumption on $0<a \leq C_B(t)$ we have a weaker condition on the boundary feedback and thus a
	wider range of possible feedback matrices $K(\e).$ The additional condition on $a \leq C_B(t)$ in the following lemma needs to be combined with the condition $\frac{4C_E}{3C_s-2C_E} < a < \frac{\lam - C_E}{4}$ of Lemma \ref{lemma: energy est}. This shows that only if $C_E<\frac{3C_s}{2}-\frac{3C_s}{2+ C_B(t)}$, the existence of $a>0$ is granted. Therefore, we have differentiated the results into two cases depending on the size of the electric
	fields $C_E.$ This is detailed in Lemma \eqref{lem01}.
	\item In Section \ref{sec:3.2} we show that the conditions imposed on the feedback $K(\e)$ can not be changed
	substantially provided exponential stability is obtained using the Lyapunov function $E_h$ defined by \eqref{eq: def E_h}.
		\item In the  limit $\e\to0$ in Theorem \ref{thm: general}, we observe that only for periodic boundary conditions exponential stability is obtained. The size $C_E$ of the electric field is independent of this condition. Hence, 
	only periodic boundary conditions are asymptotically preserving for stabilization of the considered Vlasov--Fokker--Planck dynamics. It is still an open question if for other Lyapunov- (or energy) functionals
	exponential stabilization results can be obtained. 
\end{itemize}

\section{Proof of the main results.}
\label{sec: proof}

The proofs of the two theorems are similar and we therefore will focus the proof of Theorem \ref{thm: general}
and point out the modifications for the proof of Theorem \ref{thm: general limit}. 

\subsection{Outline of the proof and  sufficient conditions on $K$ for exponential decay.} 
\label{sec: outline}
Prior to the detailed analysis of the proof, we discuss the dependence on the Knudsen number  $\e$ and  derive  necessary conditions on the boundary control matrix $K=K(\e)$ to establish the exponential decay. The conditions  will be summarized below together with some explanation of their origin. 
\par 
We aim to find  conditions for exponential stability possibly   independent of  the parameter $\e$.  In Lemma \ref{lemma: bdy layer} we formally derive the limiting equation of (\ref{VFP}) for $\e \to 0$ and we state a  sufficient condition on $K$ such that no boundary layer appears. Lemma \ref{lemma: bdy layer} yields the following sufficient condition on $K$. To simplify the notation, we drop
the dependence of the coefficients of $K$ on $\e$ if $\e>0$. The limiting case $K=K(0)$ is indicated in the coefficients as in equation \eqref{def of K} by a superscript zero.
\begin{condition}
	\label{const 1}
	We assume that the entries of $K=K(0)$ fulfill the following quadratic equations,
	\begin{align}
	&\(1-\koo^0\)\(1-\kll^0\) = \klo^0\kol^0, \label{cost1_1}\\ 
	&\(1+\koo^0\)\(1+\kll^0\)= \klo^0\kol^0.\label{cost1_2}
	\end{align}
\end{condition}

The proof of Theorem \ref{thm: general} relies on estimates of the Lyapunov functional $E_h$ defined in (\ref{eq: def E_h}). The following energy estimate is proved in Lemma \ref{lemma: energy est}. 
	\begin{equation}
	\label{eq: energy est 1}
	\begin{aligned}
	\pt_tE_h + \xi \ll h\rl^2_{V}\leq& - \int_{\R} \frac{v}{2} \(h^2(t,1,v) - h^2(t,0,v)\) dv \\
	&- \int_{\R} \frac{v}{2} \((\pt_xh)^2(t,1,v) - (\pt_xh)^2(t,0,v)\) dv \\
	&-a \(u\pt_xu(t,1) - u\pt_xu(t,0)\)+ \frac{aC_s}{2\e^2}\( \int_0^t\(u(s,1) - u(s, 0)\) ds\)^2.
	\end{aligned}
	\end{equation}
In order to obtain  exponential decay of $t \to \ll h(t) \rl^2$,  we in particular require the right--hand side of the above equation to be  non-positive. This in turn requires, in Lemma \ref{lemma: bdy for u}, a  second condition on $K$ making the  last term of the above equation vanish. This condition is as follows:
\begin{condition}
	\label{const 2}
	We assume that the entries of $K$ fulfill the following conditions:
	\begin{equation*}
	\koo+\kol = 1,\qd \klo + \kll = 1.
	\end{equation*}
\end{condition}
Finally, we require, in Lemma \ref{lemma: bdy for u}, the first three terms of the RHS of equation (\ref{eq: energy est 1})  to be non--positive. This leads to the final conditions on $K$:
\begin{condition}
	\label{const 3}
	We assume that the entries of $K$ fulfill the following  condition, where $A,B,\Ax,\Bx$ are functions of $t$ defined in equation (\ref{def of ABCD}):
	\begin{equation*}
	\begin{aligned}
	&-  2\koo(1- \koo)\(\A+\Ax\)-2\kll(1- \kll)\(\B+\Bx\)\\
	&+2\(\lv\kll(1-\koo)\rv + \lv\koo(1-\kll)\rv\)\(\sqrt{\A\B}+\sqrt{\Ax\Bx}\)\\
	&+4a\(\lv 1-\kll \rv\sqrt{\B} + \lv 1-\koo\rv\sqrt{\A}\)\(\lv\koo\rv\sqrt{\Ax}+\lv\kll\rv
	\sqrt{\Bx}\)\leq 0,
	\end{aligned}
	\end{equation*}
\end{condition}
The above assumption is a technique assumption following from the energy estimate (\ref{eq: energy est 1}), so that one could has the exponential decay of $\ll h(t) \rl$ in time.

The next lemma shows that the 
conditions on $K(\e)$ given in Theorem \ref{thm: general} and Theorem \ref{thm: general limit}
fulfill the previous assumptions. Depending on the size of the electric field $C_E$ we can have different
conditions on $K(\e).$

\begin{lemma} \label{lem01}
	Let $a \leq C_B(t)$ for all $t \geq 0$ and assume that the entries of $K(\e)$ are given by equation \eqref{cond_mh1}. 
Then, the assumptions \ref{const 2}, \ref{const 2} and \ref{const 3} are fulfilled.
	\par
	Alternatively, assume the entries of $K(\e)$ are given by equation \eqref{cond_mh2}. 
	Then, the assumptions \ref{const 1}, \ref{const 2} and \ref{const 3} are fulfilled.
	\end{lemma}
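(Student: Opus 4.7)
The proof is a direct substitution and algebraic check: plug the prescribed form of $K(\epsilon)$ into each Assumption and verify it holds. The first case asks us to verify Assumptions \ref{const 2} and \ref{const 3} (Assumption \ref{const 1} is a condition on the limit matrix $K^0$ and is not determined by \eqref{cond_mh1} alone, since $\koo(\e)$ may converge to any value in $[0,1]$), while the periodic case asks us to verify all three.

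First I would dispatch the easy verifications. Under \eqref{cond_mh1} we have $\kol=1-\koo$ and $\klo=1-\koo=1-\kll$, so $\koo+\kol=1$ and $\klo+\kll=1$, which is Assumption \ref{const 2}; specialising to $\koo=\kll=0$, $\klo=\kol=1$ gives the same conclusion for \eqref{cond_mh2}. For Assumption \ref{const 1}, substituting $\koo^0=\kll^0=0$ and $\klo^0=\kol^0=1$ reduces both equalities \eqref{cost1_1} and \eqref{cost1_2} to the identity $1=1$.

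The substantive step is Assumption \ref{const 3}. I would introduce the shorthand $k := \koo = \kll \in [0,1]$, so that $\kol = \klo = 1-k$, and drop the absolute-value signs since everything is non-negative. Substituting into the left-hand side of Assumption \ref{const 3} and collecting terms, the common factor $2k(1-k)\ge 0$ can be pulled out and the remaining bracket rearranges as
\begin{equation*}
-(A+B-2\sqrt{AB}) - (A_x+B_x-2\sqrt{A_xB_x}) + 2a(\sqrt{A}+\sqrt{B})(\sqrt{A_x}+\sqrt{B_x}).
\end{equation*}
Applying the identity $A+B-2\sqrt{AB}=(\sqrt{A}-\sqrt{B})^2$ (and its $x$-analogue) together with the definition \eqref{def of C_B} of $C_B(t)$, the bracket becomes exactly
\begin{equation*}
2\bigl(a - C_B(t)\bigr)(\sqrt{A}+\sqrt{B})(\sqrt{A_x}+\sqrt{B_x}),
\end{equation*}
which is non-positive by the hypothesis $a\le C_B(t)$. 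Hence the full expression is non-positive and Assumption \ref{const 3} holds. In the periodic case \eqref{cond_mh2} ($k=0$) every term of Assumption \ref{const 3} contains a factor of $\koo$ or $\kll$, so the left-hand side vanishes identically.

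The only non-routine step is the algebraic manipulation inside Assumption \ref{const 3}: first recognising that all four coefficient patterns share the common factor $k(1-k)$, and then observing that the remaining combination of $A,B,A_x,B_x$ assembles into precisely the squared differences appearing in the numerator of \eqref{def of C_B}. Once these two observations are in place, the hypothesis $a\le C_B(t)$ closes the argument and also clarifies why this particular inequality appears in the lemma's list of hypotheses.
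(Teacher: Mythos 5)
Your proposal is correct and follows essentially the same route as the paper: the only substantive step is Assumption \ref{const 3}, where you factor out $2\koo(1-\koo)$, rewrite the remaining bracket as $-(\sqrt{A}-\sqrt{B})^2-(\sqrt{A_x}-\sqrt{B_x})^2+2a(\sqrt{A}+\sqrt{B})(\sqrt{A_x}+\sqrt{B_x})$, and invoke the definition of $C_B(t)$ to conclude non-positivity from $a\le C_B(t)$, exactly as in the paper's proof, with the periodic case vanishing identically. Your added remark that Assumption \ref{const 1} is genuinely not implied by \eqref{cond_mh1} alone (it would force $\koo^0=0$) is a correct reading of why the lemma's first case only lists \ref{const 2} and \ref{const 3}.
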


	\begin{proof}

In fact, the only nontrivial part is to compute Assumption \ref{const 3}.  In the case
$h=0$ (then $A = B = 0$), the inequality is trivially fulfilled. Since $\koo=\kll$, Assumption \ref{const 3} can be written as
\begin{align*}
&2 \koo(1-\koo) \(  - \A - \A_x - \B - \B_x + 2 \sqrt{ \A \B} + 2\sqrt{ \A_x \B_x } + 2 a \( \sqrt{\B} + \sqrt{\A} \) \(\sqrt{\A_x} +\sqrt{\B_x} \)\) \\
=& 2 \koo (1-\koo) \( - ( \sqrt{\A} -\sqrt{\B} )^2 - ( \sqrt{\A_x}-\sqrt{\B_x} )^2 + 2a 
\( \sqrt{\B} + \sqrt{\A} \) \(\sqrt{\A_x} +\sqrt{\B_x} \)\) \\
=&   4 \koo (1-\koo)   \( \sqrt{\B} + \sqrt{\A} \) \(\sqrt{\A_x} +\sqrt{\B_x} \) 
\(  a   - C_B   \) \leq 0.
\end{align*}
By the above equation and if  we have $a \leq C_B$, then there is no additional assumption on $K$. The condition \eqref{cond_mh1} is due to Assumptions \ref{const 1} and \ref{const 2}.  On the other hand, if $a > C_B$, then the above inequality holds true if $\koo = 1$ or $\koo = 0$. Since the case $\koo = 1$ is a contradiction to Assumption \ref{const 1} for sufficiently small $\e = 0$, hence,  we obtain \eqref{cond_mh1}.
\end{proof}

Summarizing, provide that   $K=K(\e)$ satisfies the assumptions  \ref{const 1} - \ref{const 3}, and assuming   \eqref{bdd of E}, \eqref{bdy of E} and  \eqref{ass: zero mass},  we obtain the following  {\bf uniform} in $\e$ estimate of the energy $E_h$:
\begin{equation}\label{tem1}
\pt_tE_h + \xi \ll h\rl^2_V\leq 0, \;  \forall 0 \leq \e \leq 1,
\end{equation}
where $\xi$ is defined by equation (\ref{eq: def of xi}). The result will be proven below in Lemma \ref{lemma: energy est}. Assume for now that \eqref{tem1} holds. Then, integrating over $t$ and using the equivalence (\ref{eq: equiv norm}), we obtain  an estimate for $\|h(t)\|_V^2$ as 
\begin{align}
\frac{1-a\e}{2} \ll h(t) \rl^2_V \leq  E_h(t) \leq  E_h(0) - \xi \int_0^t\ll h(s)\rl^2_V ds,\\
\leq  \frac{1+a\e}{2} \ll h(0) \rl^2_V  - \xi \int_0^t\ll h(s)\rl^2_V ds. 
\end{align}
This is a Gronwall inequality for the norm of $h$ and therefore, 
\begin{align}\label{tem2}
 \ll h(t) \rl^2_V \leq & \frac{1+a\e}{1-a\e} \ll h(0) \rl^2_V  e^{-\frac{2\xi}{1-a\e}t}.
\end{align}
In Lemma \ref{lemma: energy est} we establish that 
$4 a \leq (\lam-C_E) $. 
Since $C_E \geq 0$ and $\lam \leq 1,$ see Section \ref{notation}, we have $ a \leq \frac14.$  Since we are interested in small Knudsen numbers, we may assume $\e \leq 1$ and obtain 
\begin{align} 0 \leq a \e \leq \frac14. \end{align}
Hence, we obtain a {\bf uniform} bound on $\ll h(t) \rl^2$ as
\begin{equation}
\label{eq: exponential decay}
\begin{aligned}
\ll h(t) \rl^2_V \leq & \frac{5}{4} \ll h(0) \rl^2_V  e^{-2\xi t}.
\end{aligned}
\end{equation}
Due to the definition of $h$ in equation \eqref{eq: def h} the same decay rate is obtained for $f.$

\subsection{ Alternative conditions on $K(\e)$. } \label{sec:3.2}

Clearly, the particular choice of $K(\e)$ in Theorem \ref{thm: general} and Theorem \ref{thm: general limit} 
are not the only possibility to fulfill the assumptions \ref{const 1} - \ref{const 3}. In the following section we give alternative  conditions on $K(\e)$ such that previously introduced assumptions \ref{const 1} - \ref{const 3} hold true. For readability reasons we drop now the dependence of the coefficients of $K(\e)$ on $\e$ whenever there is no confusion. We start by simplifying assumption \ref{const 3} in the case that $\koo, \kll \leq 1.$ 

Provided that 
\begin{equation}
\label{const3_1}
0\leq \koo,\kll \leq 1
\end{equation}
holds true, then assumption \ref{const 3} holds true  provided that 
\begin{equation}
\label{eq: const3_2}
\begin{aligned}
&-2\(\koo\sqrt{A} -\kll\sqrt{B} \)\((1-\koo)\sqrt{A} -(1-\kll)\sqrt{B}\)\\
&-2\(\koo\sqrt{\Ax} -\kll\sqrt{\Bx} \)\((1-\koo)\sqrt{\Ax} -(1-\kll)\sqrt{\Bx}\)\\
&+4a\((1-\koo)\sqrt{\A} +(1-\kll)\sqrt{\B}\) \(\koo\sqrt{\Ax} +\kll\sqrt{\Bx} \)\leq 0.
\end{aligned}
\end{equation}
 Even though the equation \eqref{eq: const3_2} seems more complicated,  it allows for a refined
analysis in the following two cases summarized in the following corollary.


\begin{corollary} \label{mh cor1}
	Assume that the matrix $K(\e)\in\R^{2 \times 2}$ defined by equation \eqref{def of K}  fulfills
	 \eqref{const3_1}.
	Assume that  for all $\e \geq 0$,  
	\begin{equation}\label{mh temp11}
	\begin{aligned}
	&  \koo(\e) = \kll(\e). 
	\end{aligned}
	\end{equation}
	Then, assumption \eqref{eq: const3_2} is fulfilled provided that 
\begin{equation}
		\label{const3_4}
a \leq C_B(t)= \frac{\(\sqrt{A} -\sqrt{B} \)^2+\(\sqrt{\Ax} -\sqrt{\Bx} \)^2}{2(\sqrt{\A} + \sqrt{\B})(\sqrt{\Ax} + \sqrt{\Bx})}.
		\end{equation}
\end{corollary}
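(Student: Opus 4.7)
The strategy is to exploit the symmetry imposed by $\koo(\e) = \kll(\e)$: this collapses the three blocks on the left-hand side of \eqref{eq: const3_2} into a common factor $k(1-k)$, after which the remaining bracket is precisely the inequality encoded in the definition of $C_B(t)$.

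Concretely, setting $k := \koo(\e) = \kll(\e)$, I would first rewrite the three pairwise differences appearing in \eqref{eq: const3_2} as
\begin{align*}
\koo\sqrt{\A}-\kll\sqrt{\B} &= k\bigl(\sqrt{\A}-\sqrt{\B}\bigr), \\
(1-\koo)\sqrt{\A}-(1-\kll)\sqrt{\B} &= (1-k)\bigl(\sqrt{\A}-\sqrt{\B}\bigr),
\end{align*}
and analogously for the terms involving $\Ax,\Bx$. Substituting these into the first two lines of \eqref{eq: const3_2} produces $-2k(1-k)\bigl[(\sqrt{\A}-\sqrt{\B})^2+(\sqrt{\Ax}-\sqrt{\Bx})^2\bigr]$. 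The third line collapses to $4a\,k(1-k)(\sqrt{\A}+\sqrt{\B})(\sqrt{\Ax}+\sqrt{\Bx})$, so \eqref{eq: const3_2} becomes
\begin{equation*}
2k(1-k)\Bigl[\, 2a(\sqrt{\A}+\sqrt{\B})(\sqrt{\Ax}+\sqrt{\Bx}) - (\sqrt{\A}-\sqrt{\B})^2 - (\sqrt{\Ax}-\sqrt{\Bx})^2 \Bigr] \leq 0.
\end{equation*}

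The condition \eqref{const3_1} yields $k(1-k)\geq 0$, so it suffices to show that the bracket is non-positive. Dividing (provided the product $(\sqrt{\A}+\sqrt{\B})(\sqrt{\Ax}+\sqrt{\Bx})$ is nonzero) gives exactly $a \leq C_B(t)$ as defined in \eqref{def of C_B}, which is the assumption \eqref{const3_4}. The degenerate case in which one of these sums vanishes (i.e.\ $\A=\B=0$ or $\Ax=\Bx=0$) makes the bracket trivially non-positive and hence requires no separate argument, except to note that $C_B(t)$ must be interpreted as $+\infty$ there.

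The only potential subtlety is that $C_B(t)$ is time-dependent through the boundary traces of $h$ and $\pt_xh$, so the assumption $a \leq C_B(t)$ must be read uniformly in $t\geq 0$; apart from this bookkeeping the argument is a purely algebraic simplification and I do not anticipate a genuine obstacle.
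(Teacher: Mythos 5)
Your proposal is correct and follows essentially the same route as the paper: substituting $\koo=\kll$ collapses \eqref{eq: const3_2} into the factored inequality $2\koo(1-\koo)\bigl[2a(\sqrt{\A}+\sqrt{\B})(\sqrt{\Ax}+\sqrt{\Bx})-(\sqrt{\A}-\sqrt{\B})^2-(\sqrt{\Ax}-\sqrt{\Bx})^2\bigr]\leq 0$, and nonnegativity of $\koo(1-\koo)$ from \eqref{const3_1} reduces the claim to $a\leq C_B(t)$. Your remark on the degenerate case $\A=\B=0$ matches the paper's own comment that this corresponds to the already-stabilized state $h=0$.
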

In fact, equation (\ref{eq: const3_2}) reads under assumption \eqref{mh temp11} for $\koo=\koo(\e)$
\begin{align}\label{final eq 2}
2\koo(1-\koo)\l[-(\sqrt{\A} - \sqrt{\B})^2-(\sqrt{\Ax} - \sqrt{\Bx})^2 + 2a(\sqrt{\A} + \sqrt{\B})(\sqrt{\Ax} + \sqrt{\Bx})\r]\leq 0.
\end{align}
Since $\koo(1-\koo) \geq 0$ according to (\ref{const3_1}), the above inequality holds true provided that
either $\koo(1-\koo) =0$ or 
\begin{equation}
\label{const3_5}
\begin{aligned}
a \leq C_B(t), \qd C_B(t) = \frac{\(\sqrt{A} -\sqrt{B} \)^2+\(\sqrt{\Ax} -\sqrt{\Bx} \)^2}{2(\sqrt{\A} + \sqrt{\B})(\sqrt{\Ax} + \sqrt{\Bx})}.
\end{aligned}
\end{equation}

\par 
In the subsequent  Lemma \ref{lemma: energy est} 
 we obtain the following bounds on the constants $a$ and $C_E:$  
\begin{equation}
\frac{4C_E}{3C_s-2C_E} < a < \frac{\lam - C_E}{4},\qd C_E \leq \frac{\lam C_s}{8}. \label{cost a 1}
\end{equation}
Furthermore, note that $0 \leq  \lam \leq 1.$ If we combine the inequalities \eqref{cost a 1}
and  estimate \eqref{const3_5} we obtain 
\begin{align}\label{final cond 1}
C_B \geq \frac{4 C_E}{ 3 C_s - 2 C_E} \implies C_E < \min\left\{ \frac{\lam C_s}8, 
\frac32 C_s - \frac{3C_s}{2+C_B} \right \}.
\end{align}



In the case condition \eqref{final cond 1} does not hold we have the
following result which follows directly by equation \eqref{final eq 2}. 
In this case the  boundary conditions are either periodic or reflective.
\begin{corollary} \label{mh cor2}
	For the matrix $K(\e)\in\R^{2 \times 2}$ defined by equation \eqref{def of K}, assume further that 	\begin{equation}\label{mh temp12}
	\begin{aligned}
	&  \koo(\e) = \kll(\e) \mbox{ and } \koo(\e) \in \{ 0, \; 1 \}. 
	\end{aligned}
	\end{equation}
	Then, assumption \eqref{eq: const3_2} is fulfilled. 
\end{corollary}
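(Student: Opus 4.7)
The plan is to reuse, almost verbatim, the algebraic reduction that already appears between Corollary \ref{mh cor1} and equation \eqref{final eq 2}. First I would observe that the first half of hypothesis \eqref{mh temp12}, namely $\koo(\e)=\kll(\e)$, is identical to the hypothesis \eqref{mh temp11} used to derive \eqref{final eq 2}. Hence under \eqref{mh temp12} the inequality \eqref{eq: const3_2} is equivalent to
\begin{equation*}
2\koo(1-\koo)\l[-(\sqrt{\A} - \sqrt{\B})^2-(\sqrt{\Ax} - \sqrt{\Bx})^2 + 2a(\sqrt{\A} + \sqrt{\B})(\sqrt{\Ax} + \sqrt{\Bx})\r]\leq 0,
\end{equation*}
and it is this inequality that I would actually verify.

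The verification is immediate: the second half of \eqref{mh temp12} states $\koo(\e)\in\{0,1\}$, so the scalar prefactor $\koo(1-\koo)$ vanishes. The entire left-hand side therefore equals zero and the inequality holds as an equality, independently of the values of $a$, of the electric field bound $C_E$, of $C_B(t)$, and of the boundary traces $\A(t),\B(t),\Ax(t),\Bx(t)$. In contrast with Corollary \ref{mh cor1}, which needed the quantitative bound $a\leq C_B(t)$ to control the sign of the bracketed factor, no extra constraint is required here because the prefactor itself is zero.

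There is essentially no obstacle. The one point worth being careful about is to confirm that the rewriting \eqref{final eq 2} really uses only the symmetry $\koo=\kll$ and nothing about $\koo\in(0,1)$ or $\kll\in(0,1)$; an inspection of the short computation preceding \eqref{final eq 2} confirms this, so the argument goes through under \eqref{mh temp12} without invoking \eqref{const3_1}. Finally, I would include a brief interpretive remark: combined with Assumption \ref{const 2}, which forces $\kol=1-\koo$ and $\klo=1-\kll$, the choice $\koo=\kll=0$ corresponds to transmissive (periodic-type) coupling between the two endpoints, while $\koo=\kll=1$ corresponds to pure specular reflection at both walls, so both admissible endpoints in Corollary \ref{mh cor2} have a natural physical meaning and are automatically compatible with the energy identity \eqref{eq: energy est 1} used to establish exponential decay.
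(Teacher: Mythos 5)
Your proof is correct and follows essentially the same route as the paper: under $\koo=\kll$ the inequality \eqref{eq: const3_2} reduces to \eqref{final eq 2}, whose prefactor $\koo(1-\koo)$ vanishes when $\koo\in\{0,1\}$, which is precisely the ``either $\koo(1-\koo)=0$'' branch the paper invokes. Your added observation that the reduction to \eqref{final eq 2} uses only the symmetry $\koo=\kll$ (and that \eqref{const3_1} is automatic for $\koo\in\{0,1\}$) is accurate but not a substantive departure.
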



%
%

Finally, we have the following remark on the case $h=0.$ In this case $A=B=0$ and therefore \eqref{const3_4} is not defined. However, the 
state $h=0$ corresponds to zero energy $E_h(t)=0$ and it is precisely the 
state that we stabilize. Therefore, as long as $h \not =0$, we may assume 
$A,B > 0.$

\subsection{Limiting equation and boundary layer.}
\label{sec: boundary layer}
The particular case of Theorem \ref{thm: general} requires an analysis of the spatial derivative of $\partial_x h.$ A formal differentiation of the boundary conditions \eqref{eq: general cond} to the VFP equation \eqref{micro eq} leads to the following result.  

\begin{lemma}
	\label{lemma: bdy pt_x}
	Let $h$ be a sufficient smooth solution to  systems \eqref{micro eq},   \eqref{h IC} and \eqref{eq: general cond} . Then, $\pt_xh$ fulfills the following boundary conditions. 
	\begin{equation}
	\label{eq: general cond pt_xh}
	\begin{cases}
	\pt_xh(t,0,v)   = -\koo \pt_xh(t, 0, -v) +\klo \pt_xh(t, 1, v), \qd v>0;  \\
	\pt_xh(t, 1, v) = \kol \pt_xh(t,0,v) - \kll \pt_xh(t,1,-v)     ,\qd v<0. 
	\end{cases}
	\end{equation}
\end{lemma}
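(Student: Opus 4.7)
The strategy is to evaluate the VFP equation (\ref{micro eq}) at the boundary points $x\in\{0,1\}$, where the electric field $E$ vanishes by assumption (\ref{bdy of E}), thereby converting $v\pt_x h$ into a linear combination of $\pt_t h$ and $\mL h$; both of these latter quantities inherit boundary relations from $h$, and combining them produces the claim.

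At $x_\star\in\{0,1\}$, since $E(t,x_\star)=0$, equation (\ref{micro eq}) reduces to
\[
v\,\pt_x h(t,x_\star,v) \;=\; \tfrac{1}{\e}(\mL h)(t,x_\star,v) - \e\,\pt_t h(t,x_\star,v).
\]
The $v$ prefactor on the left is exactly what produces the expected sign flip on $\koo$ and $\kll$, because replacing $v$ by $-v$ flips the sign of the left--hand side but not of $\pt_t h$ or $\mL h$. Two auxiliary inherited identities are then needed. For $\pt_t h$ the inheritance is immediate by differentiating (\ref{eq: general cond}) in $t$: for $v>0$,
\[
\pt_t h(t,0,v) \;=\; \koo\,\pt_t h(t,0,-v) + \klo\,\pt_t h(t,1,v).
\]
For $\mL h$ I claim the analogous relation
\[
(\mL h)(t,0,v) \;=\; \koo\,(\mL h)(t,0,-v) + \klo\,(\mL h)(t,1,v),\qquad v>0,
\]
which I would establish by writing $\mL h = \sM\bigl(\pt_v^2\varphi - v\,\pt_v\varphi\bigr)$ with $\varphi = h/\sM$; since $\sM$ is even in $v$, $\varphi$ inherits the same BC as $h$, and differentiating that BC twice in $v$ (together with $\sqrt{M(-v)}=\sqrt{M(v)}$ and careful tracking of the $\pm$ signs produced by the chain rule at the reflected argument) reassembles exactly the operator applied at $(t,0,-v)$.

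Combining these ingredients gives, for $v>0$,
\[
\begin{aligned}
v\,\pt_x h(t,0,v)
&= \tfrac{1}{\e}(\mL h)(t,0,v) - \e\,\pt_t h(t,0,v) \\
&= \koo\Bigl[\tfrac{1}{\e}(\mL h)(t,0,-v) - \e\,\pt_t h(t,0,-v)\Bigr] + \klo\Bigl[\tfrac{1}{\e}(\mL h)(t,1,v) - \e\,\pt_t h(t,1,v)\Bigr] \\
&= \koo\cdot(-v)\,\pt_x h(t,0,-v) \;+\; \klo\cdot v\,\pt_x h(t,1,v),
\end{aligned}
\]
where the last equality re-applies the boundary PDE at $(t,0,-v)$ (note $-v<0$) and at $(t,1,v)$. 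Dividing by $v>0$ yields the first identity of (\ref{eq: general cond pt_xh}); the identity at $x=1$ for $v<0$ follows by the symmetric computation starting from the second line of (\ref{eq: general cond}). The main obstacle is the $\mL$--inheritance identity above: matching $\pt_v^2\varphi - v\,\pt_v\varphi$ at $v$ against its value at $-v$ requires precise sign bookkeeping, since the chain rule contributes $-\koo$ after one differentiation but $+\koo$ after two, and this sign interplay with the $-v\,\pt_v\varphi$ term is where all the real content of the lemma sits; everything else is either a routine time derivative of the BC or a direct rearrangement of the PDE.
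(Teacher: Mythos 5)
Your proposal is correct and follows essentially the same route as the paper: evaluate the VFP equation at $x\in\{0,1\}$ where $E=0$, so that $v\,\pt_xh=\tfrac1\e\mL h-\e\pt_th$, use that both $\pt_th$ and $\mL h$ inherit the boundary relation (the latter because the BC differentiated twice in $v$ restores the $+\koo$ sign, exactly as the paper records via the explicit form $\mL h=\tfrac12h-\tfrac{v^2}{4}h+\pt_v^2h$), and then re-apply the PDE at $(t,0,-v)$ and $(t,1,v)$ so the odd factor $v$ produces the sign flip on $\koo$, $\kll$. Your writing of $\mL h=\sM(\pt_v^2\varphi-v\pt_v\varphi)$ with $\varphi=h/\sM$ is only a cosmetic variant of the paper's term-by-term bookkeeping, and the sign interplay you flag does close as you describe.
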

\begin{proof}
	Differentiating the boundary condition \eqref{eq: general cond} with respect to time $t$ and with respect to 
	velocity $v$ we obtain 
	the relations
	\begin{equation}
	\label{pf: relation}
	\begin{aligned}
	&\pt_th(t,0,v)   = \koo \pt_th(t, 0, -v) +\klo \pt_th(t, 1, v), \qd v>0,\\
	&\pt_v h(t,0,v)   = - \koo \pt_v h(t, 0, -v) +\klo \pt_v h(t, 1, v), \qd v>0, \\
	& v h(t,0,v) =  - \koo v h(t, 0, -v) +\klo v  h(t, 1, v), \qd v>0,\\
	&\pt_v^2h(t,0,v)   = \koo \pt_v^2h(t, 0, -v) +\klo \pt_v^2h(t, 1, v), \qd v>0.
	\end{aligned}
	\end{equation}
	The strong form of the linearized Fokker--Planck operator $\mL$ is given by 
	\begin{align}
	\mL h = \frac{1}{\sqrt{M}}    \partial_{v}  \( M \partial_v \frac{ h }{ \sqrt{M}}\).
	\end{align}
	If we differentiate $h$ we obtain 
	\begin{align}
	\mL h =\frac12h  - \frac{v^2}4h + \partial_v^2 h.
	\end{align}
Evaluating the VFP equation \eqref{micro eq} at $x=0$ and assumption \eqref{bdy of E} that $E = 0$ at boundary yields 	
	\begin{align}
	\e\pt_th(t,0,v) + v\pt_xh(t,0,v) - \frac{1}{\e}\(\frac12h  - \frac{v^2}4h + \pt_v^2h \)(t,0,v)  = 0
	,\qd v>0.\label{pf: x=0}
	\end{align}
	Reformulating the equation in terms of $v \partial_x h(t,0,v)$ 
we obtain 
%
%
	\begin{align}
	&v\pt_xh(t,0,v) = \l[-\e\pt_th + \frac{1}{\e}\( \frac12h  - \frac{v^2}4h  + \pt_v^2h  \)\r](t,0,v) \\
	=&\koo\l[-\e\pt_th + \frac{1}{\e}\( \frac12h  - \frac{v^2}4h + \pt_v^2h  \)\r](t,0,-v) + \\ 
	& + \klo\l[-\e\pt_th + \frac{1}{\e}\(\frac12h  - \frac{v^2}4h + \pt_v^2h  \)\r](t,1,v)\\
	=&-\koo v\pt_xh(t,0,-v) +\klo v\pt_xh(t,1,v),
	\end{align}
	where the last line follows by replacing $v$ by $-v$ for $v< 0$ in equation \eqref{pf: x=0}, i.e., for $v<0$ 
		\begin{align}
	\e\pt_th(t,0,-v)  - v\pt_xh(t,0,-v) - \frac{1}{\e}\( \frac12h  - \frac{v^2}4h   + \pt_v^2h \)(t,0,-v)  = 0\\
\implies	- v\pt_xh(t,0,-v) =  	\( - \e\pt_th   + \frac{1}{\e}\( \frac12h  - \frac{v^2}4h + \pt_v^2h \)  \)(t,0,-v)  
	\end{align}
	
	A similar computation can be performed for the boundary condition at $x=1.$ 
	\end{proof}

In the following lemma we discuss the limiting equation for vanishing Knudsen numbers and
 solutions $f$ to the original VFP equation \eqref{VFP}. 
\begin{lemma}
	\label{lemma: bdy layer}
For fixed initial data $f_0(x,v)=\sigma_0(x) M(v) \in V$ for some given function $\sigma_0(x)$ and fixed $\e>0$ we assume there exist a sufficiently smooth solution 
to equations \eqref{VFP} and \eqref{eq: general cond_f}. The solution is denoted by $f^\e=f^\e(t,x,v)$. We assume that a sufficiently smooth function $f^0(t,x,v)$ exists as limit of the sequence of solutions $f^\e$ for $\e \to 0.$  The following system of differential equation is then formally fulfilled by $f^0(t,x,v).$   
The solution $f^0(t,x,v)$ is given by 
\begin{align}
f^0(t,x,v)=\sigma^0(t,x) M(v)
\end{align}
where  $\s^0$ satisfies for all $x \in [0,1], t \geq 0, $
	\begin{equation}\label{macro}
	\pt_t\s^0 - \pt_x\(\pt_x\s^0+E\s^0\) =0.
	\end{equation}
	The initial condition is given by $\sigma(0,x)=\sigma_0(x).$ 
Only if
	\begin{equation}
	\label{constraint3}
	\(1-\koo^0\)\(1-\kll^0\) = \klo^0\kol^0, \qd \(1+\koo^0\)\(1+\kll^0\)= \klo^0\kol^0,
	\end{equation}
then	the function $\s^0 $ fulfills the boundary condition 
	\begin{equation*}
	\s^0(t,0) = \klo\s^0(t,1), \qd \pt_x\s^0(t,0) = \klo\pt_x\s^0(t,1).
	\end{equation*}
\end{lemma}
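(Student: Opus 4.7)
The plan is to perform a formal Hilbert-type expansion of $f^\e$ in powers of $\e$, use the resulting hierarchy of equations to identify the structure of $f^0$ and the equation obeyed by $\s^0$, and then check which conditions on $K^0$ make the boundary data of \eqref{eq: general cond_f} consistent with that limiting structure.

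First I would write $f^\e = f^0 + \e f^1 + \e^2 f^2 + O(\e^3)$ and substitute into \eqref{VFP}, matching powers of $\e$. The order $\e^{-1}$ equation gives $\mF f^0 = 0$, so $f^0$ lies in the null space of $\mF$ and has the form $f^0(t,x,v) = \s^0(t,x)M(v)$, with $\s^0(0,x)=\s_0(x)$ from the hypothesis on the initial data. The order $1$ equation is $v\pt_x f^0 - E\pt_v f^0 = \mF f^1$; using $\pt_v M = -vM$, the left-hand side equals $vM(\pt_x\s^0+E\s^0)$, and inverting $\mF$ on the range of its orthogonal complement (checked directly since $\mF(-vM)=vM$) yields $f^1 = -vM(\pt_x\s^0 + E\s^0) + \s^1(t,x)M(v)$. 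Finally, integrating the order $\e$ equation $\pt_t f^0 + v\pt_x f^1 - E\pt_v f^1 = \mF f^2$ against $dv$, the collision term and the $E$-term vanish by integration in $v$, leaving $\pt_t\s^0 + \pt_x u^1 = 0$ with $u^1=\int v f^1\,dv=-(\pt_x\s^0+E\s^0)$. This is precisely \eqref{macro}.

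Next I would derive the boundary conditions for $\s^0$. Evaluating \eqref{eq: general cond_f} at $\e=0$ on the ansatz $f^0=\s^0(t,x)M(v)$ and using $M(v)=M(-v)$, the trace at $x=0$ for $v>0$ reduces to $(1-\koo^0)\s^0(t,0) = \klo^0\s^0(t,1)$, and the trace at $x=1$ for $v<0$ reduces to $(1-\kll^0)\s^0(t,1) = \kol^0\s^0(t,0)$. Compatibility of this $2\times 2$ linear system in $(\s^0(t,0),\s^0(t,1))$ forces the determinant to vanish, giving the first relation in \eqref{constraint3}, and substitution then yields $\s^0(t,0)=\klo^0\s^0(t,1)$. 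For the derivative trace, I would invoke Lemma \ref{lemma: bdy pt_x}, which provides boundary conditions for $\pt_x h$ with sign flips on the diagonal entries $\koo,\kll$. Since $h^0=\s^0\sM$, we have $\pt_x h^0(t,x,v)=\pt_x\s^0(t,x)\sM$, and the same argument using $\sM(v)=\sM(-v)$ produces $(1+\koo^0)\pt_x\s^0(t,0)=\klo^0\pt_x\s^0(t,1)$ and $(1+\kll^0)\pt_x\s^0(t,1)=\kol^0\pt_x\s^0(t,0)$; compatibility is now $(1+\koo^0)(1+\kll^0)=\klo^0\kol^0$, and substitution gives $\pt_x\s^0(t,0)=\klo^0\pt_x\s^0(t,1)$.

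The main subtlety I expect is the bookkeeping of the two compatibility conditions across scales: the first relation in \eqref{constraint3} originates from the zeroth-order trace of $f^\e$ itself, while the second originates from the trace of $\pt_x h^\e$ whose transferred boundary condition (Lemma \ref{lemma: bdy pt_x}) carries the opposite sign on the diagonal, producing $(1+\cdot)$ rather than $(1-\cdot)$. Once the sign flip is tracked carefully and the vanishing at the boundary $E(t,0)=E(t,1)=0$ from Condition~\ref{cond: E} is used where needed to eliminate boundary contributions of the $E$-term, the remaining computations are routine substitutions in the Hilbert expansion, and the parabolic equation \eqref{macro} together with the two periodic-type boundary traces provides the well-posed limit problem.
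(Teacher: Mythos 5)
Your proposal is correct and follows essentially the same route as the paper: a formal Hilbert expansion yielding $f^0=\s^0 M$ and the drift--diffusion equation \eqref{macro}, followed by inserting the limiting ansatz into \eqref{eq: general cond_f} and into the transferred boundary conditions of Lemma \ref{lemma: bdy pt_x} (with the sign flip on $\koo,\kll$) to obtain the two compatibility relations \eqref{constraint3}. The only cosmetic difference is that you explicitly invert $\mF$ to write $f^1$ while the paper just multiplies by $v$ and integrates, which changes nothing in substance.
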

The condition \eqref{constraint3} shows that in the zero Knudsen number limit the limiting system \eqref{macro} and \eqref{constraint3} determine the solution $f^0(t,x,v).$ Hence, $f^0$ is defined
up to the boundary by the macroscopic quantity $\sigma(t,x)$ and the global Maxwellian. We therefore do {\bf not} observe a boundary layer as transition phase between the kinetic distribution at the boundary and the small Knudsen limit in the interior. 

\begin{proof}
	The proof is given by a formal expansion of $f$ in a Hilbert series in terms of the parameter $\e.$ 
	Assume   $f = h\sM$ can be expanded in terms of the Knudsen number  $f = f_0 + \e f_1 + \e^2f_2+O(\e^3)$.  
	Recalling equation \eqref{VFP} we have 
	\begin{align}
	\e\pt_t f + v\pt_xf - E\pt_v f = \frac{1}{\e}\mF f, \qd (x,v) \in \O,
	\label{VFP2}
	\end{align}
	we obtain for the expansion terms of the following order in $\e$
	\begin{equation}
	\label{eq: order}
	\begin{aligned}
	&O\(\frac{1}{\e}\): \qd \mF f_0 = 0,\\
	&O(1): \qd v\pt_xf_0 -E\pt_vf_0 = \mF(f_1),\\
	&O(\e): \qd \pt_tf_0 + v\pt_xf_1 -E\pt_vf_1 = \mF(f_2).
	\end{aligned}
	\end{equation}
If  $f_0$ has the following form 
	\begin{equation}
	\label{eq: f_0}
	f_0 =  \s^0(t,x)M(v), \; (x,v) \in \Omega
	\end{equation}
	 we obtain $\mF f_0 =0.$ We define $\s^0(t,x):=\int_\R f_0(t,x,v) dv$ for all $x \in [0,1]$ and $t \geq 0.$ 
		
	
	In the following computations we derive an equation for $\s^0.$ For example, the specific form of $f^0$ given by equation  \eqref{eq: f_0} leads in equation \eqref{eq: order} to 
	\begin{equation} 
	\begin{aligned}
	v\pt_x\s_0M +vE\s_0M =& \pt_v\(M\pt_v\(\frac{f_1}{M}\)\).
	\end{aligned}
	\end{equation}
	Multiplying $v$ and integrating over $v$ yields 
	\begin{equation}
	\label{eq: f_1}
	\int vf_1dv =-\( \pt_x\s^0 + E\s^0\).
	\end{equation}
Integration of the $O(\e)$ term in equation \eqref{eq: order} and using the previous relation \eqref{eq: f_1}
the closed formulation \eqref{macro} in terms of $s^0$ is obtained:
	\begin{equation}
	\pt_t\s^0 - \pt_x\(\pt_x\s^0+E\s^0\) =0.
	\end{equation}
	We define boundary condition for both $\s^0$ and $\pt_x\s^0$. Inserting $\lim_{\e\to0} f^\e = \s^0M$ into (\ref{eq: general cond_f}) and using that the global Maxwellian is symmetric, $M(v) = M(-v)$, we obtain 
	\begin{equation*}
	\begin{cases}
	\s^0(t,0)  = \koo\s^0(t,0) + \klo\s^0(t,1),\\
	\s^0(t,1)  = \kol\s^0(t,0) + \kll\s^0(t,1),
	\end{cases}
	\end{equation*}
	which is equivalent to 
	\begin{equation*}
	\begin{cases}
	\(1-\koo^0\)\s^0(t,0) = \klo^0\s^0(t,1),\\
	\(1-\kll^0\)\s^0(t,1)  = \kol^0\s^0(t,0).
	\end{cases}
	\end{equation*}
	The above equation holds for any $\s^0(t,0), \s^0(t,1)$ if and only if 
	\begin{equation*}
	\(1-\koo^0\)\(1-\kll^0\) = \klo^0\kol^0.
	\end{equation*}
	For boundary conditions $\pt_x \s^0$ we repeat the previous computation for the 
	 $\lim_{\e\to0}\pt_xf^\e = \pt_x\s^0 M$ and obtain 
	\begin{equation*}
	\begin{cases}
	\pt_x\s^0(t,0)  =-\koo\pt_x\s^0(t,0) + \klo\pt_x\s^0(t,1),\\
	\pt_x\s^0(t,1)  = \kol\pt_x\s^0(t,0) -\kll\pt_x\s^0(t,1). 
	\end{cases}
	\end{equation*}
This holds true only if 
	\begin{equation*}
	\(1+\koo^0\)\(1+\kll^0\)= \klo^0\kol^0,
	\end{equation*}
	which finishes the formal proof. 
\end{proof}

\subsection{Estimates on the Lyapunov Function.}
\label{sec: energy est}
In this section we establish estimates on the Lyapunov function $E_h$ defined by equation 
\eqref{eq: def E_h} and recalled here for convenience: 
\begin{align*}
E_h(t) := \frac{1}{2}\ll h(t,\cdot,\cdot )\rl^2_V + \e a\la u(t,\cdot), \pt_x\s(t,\cdot) \ra, 
\end{align*}
where $h$ fulfills equation \eqref{micro eq}, $\sigma$ and $u$ defined by equation \eqref{eq: def s}, i.e.,
\begin{align*}
\sigma(t,x)=\int_\R h(t,x,v)\sqrt{M}(v) dv, \; u(t,x)=\int_\R v h(t,x,v)\sqrt{M}(v) dv.
\end{align*}

\begin{lemma}
	\label{lemma: energy est}
Assume that Assumption \ref{cond: E} holds true and assume $1 \geq \epsilon>0$ and let any matrix $K(\e)\in \R^{2\times 2}$ be given. Then,   $E_h $ defined by equation \eqref{eq: def E_h}
fulfills the following estimates  for any  solution $h$ to (\ref{micro eq}) with initial data $h_0(\cdot,\cdot)\in V$ as in equation \eqref{h IC} and boundary conditions~\eqref{eq: general cond},
	\begin{equation}
	\label{eq: energy est}
	\begin{aligned}
	\pt_tE_h + \xi \ll h\rl^2_{V}\leq& - \int_{\R} \frac{v}{2} \(h^2(t,1,v) - h^2(t,0,v)\) dv \\
	&- \int_{\R} \frac{v}{2} \((\pt_xh)^2(t,1,v) - (\pt_xh)^2(t,0,v)\) dv \\
	&-a \(u\pt_xu(t,1) - u\pt_xu(t,0)\)+ \frac{aC_s}{2\e^2}\( \int_0^t\(u(s,1) - u(s, 0)\) ds\)^2.
	\end{aligned}
	\end{equation}
 The constant $\xi$ is given by  
	\begin{align}
	&\xi = \min\l\{ \frac{\lam-C_E-4a}{\e^2},  \frac{a(3C_s-2C_E)-4C_E}{8} \r\}>0,\label{eq: const xi}
	\end{align}
	and $a$ and $C_E$ are chosen such that 
	\begin{align}
	&\frac{4C_E}{3C_s-2C_E} < a < \frac{\lam - C_E}{4},\qd C_E \leq \frac{\lam C_s}{8}, \label{eq: cosnt a C_E}
	\end{align}
	hold. The constants  $C_s,\lam$ are defined in \eqref{sobolev const} and \eqref{coercivity}, respectively.
\end{lemma}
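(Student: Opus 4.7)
\textbf{Proof plan for Lemma \ref{lemma: energy est}.}

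The plan is to differentiate $E_h$ in time using the kinetic equation \eqref{micro eq} and the derived macroscopic equations \eqref{continuity}--\eqref{macro eq}, and then to combine the resulting dissipative terms so as to bound the entire norm $\|h\|_V^2$, not only the microscopic part $\|(1-\Pi)h\|_\omega^2$ produced by hypocoercivity. Concretely, $E_h$ decomposes into three pieces: $\tfrac12\|h\|^2$, $\tfrac12\|\partial_x h\|^2$, and the cross term $\varepsilon a\langle u,\partial_x\sigma\rangle$, and each is treated separately before being added.

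First I would test \eqref{micro eq} against $h$ and integrate in $(x,v)$. The transport term produces exactly the boundary contribution $\int_\R \tfrac{v}{2}(h^2(t,1,v)-h^2(t,0,v))\,dv$; the collision term contributes $-\tfrac{1}{\varepsilon^2}\langle \mathcal{L}h,h\rangle\geq\tfrac{\lambda}{\varepsilon^2}\|(1-\Pi)h\|_\omega^2$ by the coercivity \eqref{coercivity}; the electric field term is controlled by $\tfrac{C_E}{\varepsilon}\|h\|^2$ after an integration by parts in $v$ using \eqref{bdd of E}, \eqref{bdy of E}. Next I would differentiate \eqref{micro eq} in $x$ and test against $\partial_x h$. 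The collision and boundary structure are identical (using Lemma \ref{lemma: bdy pt_x} to justify the boundary terms) and give the coercivity bound on $(1-\Pi)\partial_x h$; here the commutator $[\partial_x, E(\partial_v-v/2)]$ produces additional terms bounded by $C_E\|h\|_V^2$ using the bounds on $\partial_x E$ and $\partial_x^3 E$ in \eqref{bdd of E}.

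The decisive step is the cross term. Using \eqref{continuity} and \eqref{macro eq}, I would compute
\begin{equation*}
\varepsilon \partial_t \langle u,\partial_x\sigma\rangle = \langle \varepsilon\partial_t u,\partial_x\sigma\rangle + \langle u, \varepsilon\partial_t\partial_x\sigma\rangle,
\end{equation*}
substitute $\varepsilon\partial_t u = -\partial_x\sigma - \int v^2\sqrt{M}(1-\Pi)\partial_x h\,dv - \tfrac{1}{\varepsilon}u - E\sigma$ and $\varepsilon\partial_t\partial_x\sigma = -\partial_x^2 u$, and integrate the $\langle u,-\partial_x^2 u\rangle$ piece by parts to pull out boundary contributions $-a(u\partial_x u)(t,1)+a(u\partial_x u)(t,0)$ and an interior term $+a\|\partial_x u\|^2 \geq 0$ (which is harmless). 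The dominant gain is $-a\|\partial_x\sigma\|^2$, i.e.\ coercivity on the hydrodynamic mode; the remaining cross terms are bounded by Young's inequality and absorbed into the microscopic dissipation and the $\|\partial_x\sigma\|^2$ gain, provided $4a<\lambda-C_E$, which is exactly the left inequality in \eqref{eq: cosnt a C_E}.

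At this point I have control of $\|\partial_x\sigma\|^2$ and $\|(1-\Pi)h\|_{V,\omega}^2$; combining them gives control of $\|h-\Pi h\|_V^2 + \|\partial_x\sigma\|^2$, and the Poincaré-type inequality \eqref{sobolev const} upgrades $\|\partial_x\sigma\|^2$ into $\tfrac{C_s}{2}\|\sigma-\int_0^1\sigma\,dx\|_V^2$. To close the loop I need to replace $\sigma - \int_0^1\sigma\,dx$ by $\sigma$ itself, and this is where the last boundary term in \eqref{eq: energy est} appears: integrating \eqref{continuity} in $x$ and in time and using Assumption~\ref{ass initial} (zero initial mass) yields
\begin{equation*}
\int_0^1\sigma(t,x)\,dx = -\frac{1}{\varepsilon}\int_0^t (u(s,1)-u(s,0))\,ds,
\end{equation*}
whose square is precisely the $\tfrac{aC_s}{2\varepsilon^2}(\cdots)^2$ term on the right-hand side. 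Finally, I collect constants to match \eqref{eq: const xi}: the microscopic rate is $(\lambda-C_E-4a)/\varepsilon^2$ and the macroscopic rate, after weighting by $\varepsilon a$ and discarding the positive $\|\partial_x u\|^2$, becomes $(a(3C_s-2C_E)-4C_E)/8$, both positive exactly under \eqref{eq: cosnt a C_E}. The main technical obstacle I expect is the careful bookkeeping of the electric-field cross terms and of the commutator produced by differentiating in $x$, since keeping the constants small enough to satisfy simultaneously the lower bound $a>4C_E/(3C_s-2C_E)$ and the upper bound $a<(\lambda-C_E)/4$ forces the smallness assumption $C_E\leq \lambda C_s/8$ built into \eqref{bdd of E}.
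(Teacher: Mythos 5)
Your overall architecture coincides with the paper's proof (test \eqref{micro eq} with $h$ and with $\pt_x h$, use the coercivity \eqref{coercivity}, compute $\e\pt_t\la u,\pt_x\s\ra$ from \eqref{continuity}--\eqref{macro eq}, integrate $\la u,\pt_x^2u\ra$ by parts, and recover $\ll\s\rl_V^2$ via Poincar\'e together with $\int_0^1\s\,dx=-\tfrac1\e\int_0^t(u(s,1)-u(s,0))ds$), but the treatment of the electric--field terms, which is where the specific constants in \eqref{eq: const xi} come from, has a genuine gap. After integrating by parts in $v$, the field term is $\la E(\pt_v-\tfrac v2)h,h\ra=-\tfrac12\int_\O E\,v\,h^2\,dxdv$, which carries a $v$--weight and is \emph{not} bounded by $\tfrac{C_E}{\e}\ll h\rl^2$ as you claim; any crude bound requires $\ll vh\rl$, i.e.\ the $\o$--norm of the full $h$, which the dissipation does not control. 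The paper's way out is structural: split the test function as $\Pi h+(1-\Pi)h$, observe the exact cancellation $\la E(\pt_v-\tfrac v2)h,\Pi h\ra=0$ (because $(\pt_v+\tfrac v2)\sM=0$, equivalently $\int vM\,dv=0$), and estimate the remaining term by Young's inequality \emph{weighted by} $\e$, giving $\tfrac{C_E}{2}\bigl(\e\ll h\rl^2+\tfrac1\e\ll(1-\Pi)h\rl_\o^2\bigr)$. Only then, after dividing by $\e$, does the macroscopic part $\ll\s\rl_V^2$ appear with the $\e$--independent coefficient $\tfrac{C_E}2$ (absorbable by the $O(1)$ gain $a\tfrac{3C_s-2C_E}{8}\ll\s\rl_V^2$), while the $(1-\Pi)h$ part sits at order $\e^{-2}$ and is absorbed by $\tfrac{\lam}{\e^2}$; this is exactly what produces $\lam-C_E-4a$ and $a(3C_s-2C_E)-4C_E$. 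With your stated bounds ($\tfrac{C_E}\e\ll h\rl^2$, and $C_E\ll h\rl_V^2$ for the $\pt_xE$ commutator) the $\s$--component would enter at order $\e^{-1}$ or $\e^{-2}$ and could not be absorbed, so the claimed rate $\xi$ would not follow; the same $\Pi$/$(1-\Pi)$ splitting and $\e$--weighted Young are needed in the $\pt_x$--estimate.

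A smaller but real slip: the interior term $a\ll\pt_xu\rl^2$ produced by integrating $\la u,\pt_x^2u\ra$ by parts enters on the \emph{bad} side (it increases $\pt_tE_h$), so it cannot be ``discarded''; it must be absorbed, e.g.\ via $\ll\pt_xu\rl\leq\ll(1-\Pi)\pt_xh\rl_\o$, into the $O(\e^{-2})$ microscopic dissipation, and it is part of how the deficit $4a$ in $\lam-C_E-4a$ arises (the paper collects $\ll\pt_xu\rl^2+\tfrac{2}{\e^2}\ll u\rl^2+2\ll(1-\Pi)\pt_xh\rl_\o^2\leq\tfrac4{\e^2}\ll(1-\Pi)h\rl_{V,\o}^2$ using $\e\leq1$). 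Apart from these two points your route is the paper's route, and once the field terms are handled with the cancellation and $\e$--weighted Young described above, the rest of your plan closes as stated.
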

\begin{proof}
	The proof is divided into two parts. 
	
Consider equation 	\eqref{micro eq}, multiply by $\frac{h}2$, integrate on $\Omega$ and use the estimate
(\ref{coercivity}) to obtain 
	\begin{equation}
	\label{eq: micro_1}
	\frac{\e}{2} \pt_t \ll h \rl^2 + \int \frac{v}{2} \pt_x (h^2) dxdv + \frac{\lam}{\e} \ll (1-\Pi)h \rl_\o^2 \leq \la E\(\pt_v - \frac{v}{2}\)h, h \ra. 
	\end{equation}
	In the following computation we use $<g,h>=\int_\Omega g h dxdv$ for simplified notations.  We use the decomposition of $h$ into the macroscopic kernel $\Pi h(t,x,v) = \s(t,x)M(v)$ and its complement to simplify the right-hand side,
\begin{align}
	\la E\(\pt_v - \frac{v}{2}\)h, h \ra  =&  \la E\(\pt_v - \frac{v}{2}\)h, \s\sM \ra + \la E\(\pt_v - \frac{v}{2}\)h, (1-\Pi)h\ra\nonumber\\
	= & \la Eh, \(-\pt_v - \frac{v}{2}\)\s\sM \ra + \la E h, \(-\pt_v - \frac{v}{2}\)(1-\Pi)h\ra\nonumber\\
	\leq & \la Eh, \(\frac{v}{2} - \frac{v}{2}\)\s\sM \ra + \ll E \rl_{L^\infty_x} \(\frac{\e}{2}\ll h \rl^2 + \frac{1}{2\e}\ll (\pt_v + \frac{v}{2})(1-\Pi)h \rl^2 \)\nonumber\\
	\leq &\ll E \rl_{L^\infty_x} \(\frac{\e}{2}\ll h \rl^2 + \frac{1}{2\e}\ll (1-\Pi)h \rl_\o^2 \).\label{eq: nonlinear est}
	\end{align}
	Note that we only integrate by parts in $v$ which is defined in the whole space, therefore no boundary conditions appear. Further, note that we used in the last line Young's inequality with the same fixed value of $\e$ 
	as in the equation. The estimate for  terms involving the electric field $E$ yields in equation \eqref{eq: micro_1} the following  preliminary result 
	\begin{equation}
	\label{eq: micro_2}
	\begin{aligned}
	\frac{\e}{2} \pt_t \ll h \rl^2 + \frac{\lam}{\e} \ll (1-\Pi)h \rl_\o^2 \leq& \frac{1}{2}\ll E \rl_{L^\infty_x} \(\e\ll h \rl^2 + \frac{1}{\e} \ll (1-\Pi)h \rl_\o^2 \) \\
	&- \int \frac{v}{2}\(h^2(t,1,v) - h^2(t,0,v)\) dv . 
	\end{aligned}
	\end{equation}
	The previous computation is repeated for the spatial derivative of (\ref{micro eq}). Integration on $\Omega$ yields 	
	\begin{equation}
	\label{eq: micro_3}
	\begin{aligned}
	&\frac{\e}{2} \pt_t \ll \pt_xh \rl^2 + \int \frac{v}{2} \pt_x ((\pt_xh)^2) dxdv + \frac{\lam}{\e} \ll (1-\Pi)\pt_xh \rl_\o^2 \\
	\leq& \la (\pt_xE)\(\pt_v - \frac{v}{2}\)h, \pt_xh \ra + \la E\(\pt_v - \frac{v}{2}\)\pt_xh, \pt_xh \ra. 
	\end{aligned}
	\end{equation}
	
	The second part on the right--hand side can be bounded in a similar fashion as in equation \eqref{eq: nonlinear est}. For first part of the right--hand side we apply Young's inequality and use the fact that $\partial_x E$ is also bounded. Hence, we obtain 
	\begin{equation}
	\label{eq: nonlinear est_2}
	\begin{aligned}
	& \la \pt_xE\(\pt_v - \frac{v}{2}\)h, \pt_xh \ra + \la E\(\pt_v - \frac{v}{2}\)\pt_xh, \pt_xh \ra \\
	\leq& \frac{1}{2} \ll \pt_xE\rl_{L^\infty_x} \( \e\ll h \rl^2 +  \frac{1}{\e}\ll (1-\Pi)\pt_xh \rl^2_\o\) + \frac{1}{2}\ll E \rl_{L^\infty_x} \(\e \ll \pt_x h\rl^2 + \frac{1}{\e}\ll (1-\Pi)\pt_xh \rl_\o^2\).
	\end{aligned}
	\end{equation}
	Combining the estimate on the terms of electric field with equation \eqref{eq: micro_3} we obtain 
	a bound on the norm of $\pt_x h$ similar to \eqref{eq: micro_2},
	\begin{equation}
	\label{eq: micro_4}
	\begin{aligned}
	&\frac{\e}{2} \pt_t \ll \pt_xh \rl^2  + \frac{\lam}{\e} \ll (1-\Pi)\pt_xh \rl_\o^2 \\
	\leq&  \frac{1}{2} \ll \pt_xE\rl_{L^\infty_x} \( \e\ll h \rl^2 +  \frac{1}{\e}\ll (1-\Pi)\pt_xh \rl^2_\o\) + \frac{1}{2}\ll E \rl_{L^\infty_x} \(\e \ll \pt_x h\rl^2 + \frac{1}{\e}\ll (1-\Pi)\pt_xh \rl_\o^2\)\\
	&- \int \frac{v}{2}\((\pt_xh)^2(t,1,v) - (\pt_xh)^2(t,0,v)\) dv  . 
	\end{aligned}
	\end{equation}
	Both bounds  (\ref{eq: micro_2}) and (\ref{eq: micro_4}) together give a bound in the $V-$ and $V,\omega$-norm defined in Section \ref{notation},
	\begin{equation}
	\label{eq: micro_5}
	\begin{aligned}
	&\frac{\e}{2} \pt_t \ll h \rl_V^2  + \frac{\lam}{\e} \ll (1-\Pi)h \rl_{V,\o}^2 \\
	\leq&  \frac{C_E}{2} \(\e \ll h\rl_V^2 + \frac{1}{\e}\ll (1-\Pi)h \rl_{V,\o}^2\) - \int \frac{v}{2}\(h^2(t,1,v) - h^2(t,0,v)\) dv  \\
	&- \int \frac{v}{2}\((\pt_xh)^2(t,1,v) - (\pt_xh)^2(t,0,v)\) dv  . 
	\end{aligned}
	\end{equation}
	In estimate \eqref{eq: micro_5} we estimate by $C_E$ the maximum of the $L^\infty-$ norm of the electric field and its	spatial derivative as stated in Assumption \ref{cond: E}. 
	Finally, note that $\| h\|^2_V= \ll \s \rl^2_V + \ll (1-\Pi)h \rl_V^2$, see Section \ref{notation}. Hence, 
	we obtain after multiplication by $\frac1\e$ 
	\begin{equation}
	\label{eq: micro_6}
	\begin{aligned}
	\frac{1}{2} \pt_t \ll h \rl_V^2  + \frac{\(\lam - C_E\)}{\e^2} \ll (1-\Pi)h \rl_{V,\o}^2\leq&  \frac{C_E}{2}  \ll \s\rl_V^2  - \frac{1}{\e}\int \frac{v}{2}\(h^2(t,1,v) - h^2(t,0,v)\) dv  \\
	&- \frac{1}{\e}\int \frac{v}{2}\((\pt_xh)^2(t,1,v) - (\pt_xh)^2(t,0,v)\) dv  . 
	\end{aligned}
	\end{equation}
	This completes the first part of the proof. 	
	
	In the second step we derive an estimate on the macroscopic quantities $\s$ and $u.$ Recall that, provided $h$ is a solution to equation \eqref{micro eq}, the corresponding macroscopic quantities fulfill \eqref{continuity} and equation \eqref{macro eq}, i.e., 
	\begin{align}
	&\e \pt_t \s + \pt_x u = 0, \\
	&\e \pt_t u + \pt_x \s + \int v^2 \sqrt{M} (1-\Pi) \partial_x h dv + \frac{1}{\e} u = - E \s. \label{mh ss}
	\end{align}
		Upon multiplication of $\pt_x\s$ to equation  (\ref{mh ss}), and integration on $x$, we obtain  
	\begin{equation}
	\label{eq: macro_1}
	\begin{aligned}
	&\e\la \pt_t u, \pt_x\s \ra + \ll \pt_x\s \rl^2 + \la v^2\sM (1-\Pi)\pt_xh, \pt_x\s \ra + \frac{1}{\e} \la u, \pt_x\s \ra_x = -\la E\s, \pt_x\s\ra_x 
	\end{aligned}
\end{equation}
Note that here $<\cdot,\cdot>_x$ denotes the integration on $x$ only. By Young's inequality we bound the 
 third and fourth terms in equation \eqref{eq: macro_1} as follows: 
\begin{equation}
\label{eq: term_2}
\begin{aligned}
\la v^2\sM (1-\Pi)\pt_xh, \pt_x\s \ra &\leq \int  \ll v (1-\Pi)\pt_xh \rl_{L^2_v}\ll v\sM \rl_{L^2_v}  \pt_x\s dx \\
 &\leq \frac{1}{2}\ll v\sM \rl_{L^2_v} \(4\ll v(1-\Pi)\pt_xh \rl^2 +  \frac{1}{4}\ll \pt_x\s\rl^2\) \\
 &\leq 2\ll (1-\Pi)\pt_xh \rl_\o^2 + \frac{1}{8}\ll \pt_x\s \rl^2;\\
\frac{1}{\e}\la u, \pt_x\s \ra_x & \leq \frac{1}{2\e}\(\frac4\e\ll u \rl^2 +\frac\e4 \ll \pt_x\s \rl^2\) \leq \frac{2}{\e^2}\ll u\rl^2 + \frac{1}{8}\ll \pt_x\s \rl^2;\\
\la E\s, \pt_x\s\ra_x &  \leq \frac{1}{2}\ll E \rl_{L^\infty_x} (\ll \s\rl^2+ \ll \pt_x\s \rl^2)\leq  \frac{C_E}{4} \ll \s \rl_V^2 ,
\end{aligned}
\end{equation}
where we apply assumption \eqref{bdd of E} in the last inequality.
Using the continuity equation \eqref{continuity} we simplify  the term $<\pt_t u, \pt_x \s>$, 	
	\begin{equation}
	\label{eq: term_1}
	\begin{aligned}
	&\e\la\pt_tu, \pt_x\s \ra = \e\pt_t \la u, \pt_x\s \ra + \la u, \pt_x(  \pt_x u ) \ra =  \e\pt_t \la u, \pt_x\s \ra +  \la u, \pt_x^2u \ra  \\
	=&  \e\pt_t \int  u  \pt_x\s dx +    ( u\pt_xu) (t,1) - (u\pt_xu) (t,0)  - \ll \pt_xu \rl^2.
	\end{aligned}
	\end{equation}
Combining the previous estimates we can simplify (\ref{eq: macro_1}) as 
	\begin{equation}
	\label{eq: macro_2}
	\begin{aligned}
	&\e \pt_t \la u, \pt_x\s \ra   + \ll \pt_x\s \rl^2 \leq  - ( u\pt_xu) (t,1) - (u\pt_xu) (t,0)  + \| \pt_x u\|^2  
	+ \\
	& + \frac{C_E}4 \|\s\|_V^2  +
	\frac{2}{\e^2} \|u\|^2 + \frac28 \| \pt_x \s \|^2  + 2 \| (1-\Pi) \pt_x h\|^2_\omega,
	\\
	&\e\pt_t\la u, \pt_x\s \ra + \frac{3}{4}\ll \pt_x\s \rl^2  \leq \frac{C_E}{4}\ll \s \rl^2_{V} +  \frac{4}{\e^2}\ll (1-\Pi)h \rl_{V,,\omega}^2- \(u\pt_xu (t,1,z) - u\pt_xu(t,0,z)\)  ,
	\end{aligned}
	\end{equation}
	where $\| u \|^2 \leq  \ll u \rl_V^2 \leq \ll (1-\Pi) h \rl_{V,\o}^2$ is used. 
	
	Next we turn to estimates of $\s$ and $\pt_x \s.$  By  Poincare's inequality (\ref{sobolev const}), there exists a constant $C_s\leq 1$, such that
	\begin{align}
	\ll \pt_x\s \rl^2 \geq \frac{C_s}2 \ll \s - \int_0^1 \s dx\rl^2 \geq \frac{C_s}2 \(\ll \s \rl_V^2 -\( \int_0^1 \s \, dx\)^2\). \label{eq: poincare}
	\end{align}
In order to estimate also $\s$ we integrate the continuity equation \eqref{continuity} with respect to $x$, 
	\begin{equation}
	\begin{aligned}
	&\e\pt_t\int_0^1\s dx = -\(u(t,1) - u(t, 0)\)\\
	 \implies 
	&\int_0^1\s(t,x) dx - \int_0^1\s_0(x) dx =- \frac{1}{\e} \int_0^t\(u(s,1) - u(s, 0)\) ds.
	\end{aligned}
	\end{equation}
	Due to Assumption \ref{ass initial} we have that $\int_\Omega h_0(x,v) dx dv=0.$ Therefore, 
	 $\int_0^1 \s_0(x) dx =0$ and inequality \eqref{eq: poincare} simplifies to 
	\begin{equation}
	\begin{aligned}
	\| \pt_x \sigma \|^2 \geq \frac{C_s}2 \(\ll \s \rl_V^2 -  \frac{1}{\e^2}\( \int_0^t\(u(s,1) - u(s, 0)\) ds\)^2
\). 
	\end{aligned}
	\end{equation}
	Inserting the above inequality to \eqref{eq: macro_2} gives: 
	\begin{align}
		\e\pt_t\la u, \pt_x\s \ra + \frac{ 3 C_s}{8} 
	\(\ll \s \rl_V^2 -  \frac{1}{\e^2}\( \int_0^t\(u(s,1) - u(s, 0)\) ds\)^2
	\)\nonumber\\ \leq 
	\frac{C_E}{4}\ll \s \rl^2_{V} +  \frac{4}{\e^2}\ll (1-\Pi)h \rl_{V,\omega}^2- \(u\pt_xu (t,1,z) - u\pt_xu(t,0,z)\). \nonumber
	\end{align}
	This in turn implies the final estimate for $\s$.  
	\begin{align}
	\e\pt_t\la u, \pt_x\s \ra + \frac{ 3 C_s - 2 C_E  }{8} \| \s\|_V^2 
	&\leq  \frac{C_s}{2 \e^2}\( \int_0^t\(u(s,1) - u(s, 0)\) ds\)^2 +  \nonumber\\ 
	& + \frac{4}{\e^2} \ll (1-\Pi)h \rl_{V,\omega}^2  - \(u\pt_xu (t,1,z) - u\pt_xu(t,0,z)\). 
		\label{eq: macro_3}\end{align}
In the last step of the proof we add the estimate for $h$ obtained in equation \eqref{eq: micro_6}
and the previous estimate \eqref{eq: macro_3} (multiplied by $a \geq 0$). 

\begin{align}
&\frac{1}{2} \pt_t \ll h \rl_V^2  +a \e  \pt_t\la u, \pt_x\s \ra + a \frac{ 3 C_s - 2 C_E  }{8} \| \s\|_V^2  + 
\frac{\(\lam - C_E\)}{\e^2} \ll (1-\Pi)h \rl_{V,\o}^2   \nonumber\\ 
\leq&
  \frac{C_E}{2}  \ll \s\rl_V^2 + \frac{4a }{\e^2} \ll (1-\Pi)h \rl_{V,\omega}^2\nonumber \\ 
  &+ \frac{C_s a}{2 \e^2}\( \int_0^t\(u(s,1) - u(s, 0)\) ds\)^2 + 
 - a \(u\pt_xu (t,1,z) - u\pt_xu(t,0,z)\)\nonumber \\
&- \frac{1}{\e}\int \frac{v}{2}\((\pt_xh)^2(t,1,v) - (\pt_xh)^2(t,0,v)\) dv.
\end{align}
 
 This implies that for $\xi$ sufficiently small 
\begin{align} 
&\partial_t E_h(t)  + \xi \| h \|_{V}^2 \leq \partial_t E_h(t)  + \xi \| h \|_{V,\omega}^2\nonumber \\
\leq &\partial_t E_h(t)  + \(  a \frac{ 3 C_s - 2 C_E   }{8} - \frac{C_E}2 \) \| \s\|_V^2  + 
\frac{\(\lam - C_E - 4 a \)}{\e^2} \ll (1-\Pi)h \rl_{V,\o}^2 \nonumber \\
\leq &   - \frac{1}{\e}\int \frac{v}{2}\(h^2(t,1,v) - h^2(t,0,v)\) dv  
- \frac{1}{\e}\int \frac{v}{2}\((\pt_xh)^2(t,1,v) - (\pt_xh)^2(t,0,v)\) dv  \nonumber\\
&+\frac{C_s a}{2 \e^2}\( \int_0^t\(u(s,1) - u(s, 0)\) ds\)^2  - a \(u\pt_xu (t,1,z) - u\pt_xu(t,0,z)\). \label{eqE_h}
\end{align}
More precisely, $\xi$ can be chosen as 
	\begin{equation*}
	\xi = \min\l\{ \frac{\lam-C_E-4a}{\e^2},  \frac{a(3C_s-2C_E)-4C_E}{8} \r\}. 
	\end{equation*}
	In order to obtain  exponential decay of $\ll h\rl_V^2$, we require $\xi>0$, which implies the following condition on $a:$ 
	\begin{align}
	\label{eq: cosnt a}
	&\frac{4C_E}{3C_s-2C_E} < a < \frac{\lam - C_E}{4}.
	\end{align}
	In order for $a$ to exist we require that the previous bounds are ordered. A sufficient condition for 
	$\frac{4C_E}{3C_s-2C_E} < \frac{\lam - C_E}{4}$  is for example given by 
	\begin{align}
	\label{eq: cosnt C_E}
	&C_E \leq \frac{\lam C_s}{8}
	\end{align}
	since $\lam \leq 1, C_s \leq \frac{1}{2}$. 
\end{proof}

\subsection{Constraints on the feedback control.}
\label{sec: boundary cond}
In the previous section we derived an estimate for the decay of $E_h.$ The final equation still includes
boundary terms. In the following two Lemmas, we derive conditions on $K(\e)$ such that in the estimate \eqref{eq: energy est}   terms involving the boundary values of $h$ at $x\in \{0,1\}$   are non-positive. 
This fact  allows to conclude the proofs of Theorem \ref{thm: general} and Theorem \ref{thm: general limit}, respectively. Note that although the following lemmas are stated for function $f$, they also remain valid
 for $h$. In fact, at the boundary $f$ and $h$ are the same up to a scaling by $\sqrt{M(v)}.$

\begin{lemma}
	\label{lemma: bdy for u}
Let Assumption \ref{ass initial} hold true and let $\e\geq 0.$ Assume that $f$ solves the VFP \eqref{VFP} with initial condition $f(0,x,v)=f_0(x,v)$ for some $f_0(\cdot,\cdot) \in V$ and boundary conditions \eqref{eq: general cond_f}. 

If  the entries of $K(\e)$ defined by \eqref{def of K} fulfill  
	\begin{equation}
	\label{constraint 1}
	\koo(\e)+\kol(\e) = 1,\qd \klo(\e) + \kll(\e) = 1,
	\end{equation}
	then the boundary flux $u(t,x)=\int_\R v f(t,x,v) dv$  for $x \in \{0,1 \}$ fulfills
	\begin{equation}
	\label{bdy of u}
	u(t,1)=u(t,0).
	\end{equation}
\end{lemma}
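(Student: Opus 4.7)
The plan is to prove the claimed identity $u(t,0)=u(t,1)$ by direct algebraic manipulation of the boundary conditions \eqref{eq: general cond_f}; no dynamic or interior information about $f$ is needed, only the algebraic structure of $K(\e)$. The key observation is that the flux $u(t,x)=\int_\R v f(t,x,v)\,dv$ at $x\in\{0,1\}$ couples the incoming half of $f$ (which is prescribed by the boundary condition) and the outgoing half (which is free data), and the condition $k_{00}+k_{01}=1$, $k_{10}+k_{11}=1$ exactly balances the two contributions between the two endpoints.

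First I would split both $u(t,0)$ and $u(t,1)$ into velocity half-line integrals
\begin{equation*}
u(t,0)=\int_0^\infty v f(t,0,v)\,dv+\int_{-\infty}^0 v f(t,0,v)\,dv,
\end{equation*}
and similarly at $x=1$. For $u(t,0)$, the $v>0$ half is incoming data and can be replaced using the first line of \eqref{eq: general cond_f}. After substitution, I apply the change of variable $v\mapsto -v$ to the term containing $f(t,0,-v)$; the Jacobian together with the factor $v$ produces an extra minus sign, turning that piece into $-\koo\int_{-\infty}^0 v f(t,0,v)\,dv$. Combining with the unmodified $v<0$ integral yields
\begin{equation*}
u(t,0)=(1-\koo)\int_{-\infty}^0 v f(t,0,v)\,dv+\klo\int_0^\infty v f(t,1,v)\,dv.
\end{equation*}

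Repeating the identical procedure at $x=1$, this time using the second line of \eqref{eq: general cond_f} on the $v<0$ half and applying the change of variable $v\mapsto -v$ inside the $\kll f(t,1,-v)$ term, gives
\begin{equation*}
u(t,1)=\kol\int_{-\infty}^0 v f(t,0,v)\,dv+(1-\kll)\int_0^\infty v f(t,1,v)\,dv.
\end{equation*}

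Finally, I would subtract the two expressions. The difference equals
\begin{equation*}
\bigl(1-\koo-\kol\bigr)\!\int_{-\infty}^0 v f(t,0,v)\,dv+\bigl(\klo-(1-\kll)\bigr)\!\int_0^\infty v f(t,1,v)\,dv,
\end{equation*}
which vanishes identically precisely because of assumption \eqref{constraint 1}. This gives $u(t,0)=u(t,1)$ as required. I do not anticipate any real obstacle; the only place to be careful is bookkeeping of signs in the $v\mapsto -v$ substitutions, and the lemma's Assumption \ref{ass initial} on zero mean is not actually invoked in this identity (it is listed for consistency with later arguments involving the time integrals of $u(t,1)-u(t,0)$).
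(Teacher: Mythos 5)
Your proposal is correct and follows essentially the same route as the paper: split $u$ at $x\in\{0,1\}$ into half-line integrals, substitute the boundary conditions on the incoming halves, change variables $v\mapsto -v$, and subtract so that \eqref{constraint 1} makes the difference vanish. The sign bookkeeping in your substitutions matches the paper's computation, so no changes are needed.
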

\begin{proof}
A direct computation shows that 
	\begin{equation}
	\label{u bdy_1}
	\begin{aligned}
	&u(t,0) = \int_0^\infty v f(t,0,v) dv + \int_{-\infty}^0 vf(t,0,v)dv\\  
	=&  \int_0^\infty v \koo f(t,0,-v) dv +\int_0^\infty v \klo f(t,1,v) dv + \int_{-\infty}^0 vf(t,0,v)dv\\
	=& \(1-\koo\)\int^0_{-\infty} v  f(t,0,v) dv+ \klo\int_0^\infty v f(t,1,v) dv.
	\end{aligned}
	\end{equation}
	Similarly, we obtain  
	\begin{equation}
	\label{u bdy_2}
	\begin{aligned}
	&u(t,1) = \(1-\kll\)\int_0^{\infty} v  f(t,1,v) dv + \kol\int_{-\infty}^0 v  f(t,0, v) dv.
	\end{aligned}
	\end{equation}
	Subtracting (\ref{u bdy_2}) from (\ref{u bdy_1}) and inserting the assumption (\ref{u bdy_1}) gives
	\begin{equation}
	\label{u bdy_1}
	\begin{aligned}
	&u(t,0) - u(t,1)
	= \(1-\koo-\kol\)\int^0_{-\infty} v  f(t,0,v) dv+ \(\klo-1+\kll\)\int_0^\infty v f(t,1,v) dv = 0.
	\end{aligned}
	\end{equation}
	This finishes the proof. 
\end{proof}

Finally, we derive an estimate on the boundary contributions in the estimate \eqref{eq: energy est}. 
\begin{lemma}\label{lemma: negative RHS est}
	
Let Assumption \ref{ass initial} hold true and let $\e\geq 0.$ Assume that $h$ solves the VFP \eqref{VFP2} with initial condition $h(0,x,v)=\frac{f_0(x,v)}{\sqrt{M}}$ for some $f_0(\cdot,\cdot) \in V$ and boundary conditions \eqref{eq: general cond}. 
 
 Let assumption \eqref{const 2} hold true, i.e., 
  \begin{equation}
	\label{eq: relation_1}
	\koo(\e) + \kol(\e) = 1, \qd \kll(\e)+\klo(\e) = 1. 
	\end{equation}
	
	Then, $h$ fulfills the estimate \eqref{eq: energy est} and we have  
	\begin{equation*}
	-\int_\R \frac{v}{2}\l[h^2\r]_0^1 dv-\int_\R \frac{v}{2}\l[(\pt_xh)^2\r]_0^1  dv - a\l[u\pt_xu\r]_0^1 \leq I(t,\e),
	\end{equation*}
	where  by $[g]_0^1 = g(t,1,v)-g(t,0,v)$. The term $I(t,\e)$ is given by  
	\begin{equation}
	\label{def of I(t,e)}
	\begin{aligned}
	I(t,\e) 
	= &-  2\koo(1- \koo)\(\A+\Ax\)-2\kll(1- \kll)\(\B+\Bx\)\\
	&+2\(\lv\kll(1-\koo)\rv + \lv\koo(1-\kll)\rv\)\(\sqrt{\A\B}+\sqrt{\Ax\Bx}\)\\
	&+4a\(\lv 1-\kll \rv\sqrt{\B} + \lv 1-\koo\rv\sqrt{\A}\)\(\lv\koo\rv\sqrt{\Ax}+\lv\kll\rv
	\sqrt{\Bx}\)
	\end{aligned}
	\end{equation}
	where $\A(t),\B(t),\Ax(t),\Bx(t)$ are positive functions of time $t$ defined by equation (\ref{def of ABCD}).
\end{lemma}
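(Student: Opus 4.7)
The left-hand side breaks into three independent boundary contributions, coming from $h^2$, $(\pt_x h)^2$, and $au\pt_x u$. Each is treated by the same three-step template: substitute the boundary conditions \eqref{eq: general cond} (respectively the induced conditions \eqref{eq: general cond pt_xh} from Lemma \ref{lemma: bdy pt_x}), change variables $v\mapsto -v$ to put every outgoing piece over $(0,\infty)$ in terms of the incoming data $A,B,A_x,B_x$, and close with Cauchy--Schwarz on the cross term. Throughout, \eqref{eq: relation_1} is used in the form $\kol=1-\koo$ and $\klo=1-\kll$, which is exactly what makes the coefficient combinations collapse.

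For the $h^2$ contribution I split $\int_\R \frac{v}{2}h^2(t,i,v)\,dv$ into $v>0$ and $v<0$ halves, so that the ``incoming'' halves produce $-A-B$ directly. On the outgoing halves I substitute \eqref{eq: general cond} and apply $v\mapsto -v$ to the $x=1$ piece; after summing, the coefficients of $h^2(t,0,-v)$ and $h^2(t,1,v)$ are $\koo^2+(1-\koo)^2=1-2\koo(1-\koo)$ and $(1-\kll)^2+\kll^2=1-2\kll(1-\kll)$, which cancel the bare $A$ and $B$ and leave $-2\koo(1-\koo)A-2\kll(1-\kll)B$. The cross term carries coefficient $2[\koo(1-\kll)+(1-\koo)\kll]$, and Cauchy--Schwarz on $(0,\infty)$ with weight $v/2$ bounds it by $2(|\koo(1-\kll)|+|\kll(1-\koo)|)\sqrt{AB}$. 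The $(\pt_x h)^2$ contribution is handled verbatim via \eqref{eq: general cond pt_xh}; the minus signs there flip the sign of the cross term, but this is absorbed into the absolute value in Cauchy--Schwarz, so one obtains the same expression with $A\to A_x$ and $B\to B_x$. Together these produce the first two lines of $I(t,\e)$.

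For the third term, Lemma \ref{lemma: bdy for u} gives $u(t,0)=u(t,1)$ under \eqref{eq: relation_1}, so $-a[u\pt_x u]_0^1=-a\,u(t,0)\,(\pt_x u(t,1)-\pt_x u(t,0))$. Expanding $u(t,0)=\int_\R vh(t,0,v)\sqrt{M}\,dv$ by the same template as in the proof of Lemma \ref{lemma: bdy for u} gives $u(t,0)=(1-\koo)\int_{-\infty}^0 vh(t,0,v)\sqrt{M}\,dv+(1-\kll)\int_0^\infty vh(t,1,v)\sqrt{M}\,dv$, and Cauchy--Schwarz with weight $|v|\sqrt{M}$ bounds $|u(t,0)|$ by a constant multiple of $|1-\koo|\sqrt{A}+|1-\kll|\sqrt{B}$. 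The analogous computation on $\pt_x u$, now using \eqref{eq: general cond pt_xh}, yields $\pt_x u(t,1)-\pt_x u(t,0)=2\kll\int_0^\infty v\pt_x h(t,1,v)\sqrt{M}\,dv-2\koo\int_{-\infty}^0 v\pt_x h(t,0,v)\sqrt{M}\,dv$: crucially, the sign flips in \eqref{eq: general cond pt_xh} cause the $(1\pm\koo)$ and $(1\pm\kll)$ pieces of $\pt_x u(t,0)$ and $\pt_x u(t,1)$ to subtract and leave only the reinforcing terms, whereas for $u(t,1)-u(t,0)$ the parallel cancellation gave zero. Cauchy--Schwarz then bounds this difference by a constant multiple of $|\koo|\sqrt{A_x}+|\kll|\sqrt{B_x}$, and multiplying the two bounds absorbs the Gaussian constants $\int_0^\infty vM\,dv$ into the coefficient $4a$ of the last line of $I(t,\e)$.

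The main obstacle is not analytical but combinatorial: correctly tracking signs under $v\mapsto -v$, identifying which boundary condition applies on each half-line, and verifying that the sign structure of \eqref{eq: general cond pt_xh} makes $\pt_x u(t,1)-\pt_x u(t,0)$ add the two half-line contributions rather than cancel them -- this is precisely the mechanism that forces a nontrivial $A_x,B_x$ contribution in $I(t,\e)$ even though $u(t,1)=u(t,0)$. Once these sign conventions are pinned down, each of the three estimates reduces to the same ``substitute, change variables, Cauchy--Schwarz'' computation, and their sum is $I(t,\e)$.
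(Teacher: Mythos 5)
Your proposal is correct and follows essentially the same route as the paper's proof: split each boundary term into half-lines, substitute \eqref{eq: general cond} (and the sign-flipped conditions of Lemma \ref{lemma: bdy pt_x} for $\pt_x h$), change variables $v\mapsto -v$, collect the coefficients $\koo^2+(1-\koo)^2$, $\kll^2+(1-\kll)^2$ against the bare $-A,-B$, bound cross terms by Cauchy--Schwarz, and for the last term use $u(t,0)=u(t,1)$ together with $\pt_xu(t,1)-\pt_xu(t,0)=2\kll\int_0^\infty v\sM\pt_xh(t,1,v)\,dv-2\koo\int_{-\infty}^0 v\sM\pt_xh(t,0,v)\,dv$, exactly as in the paper. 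The only blemish is terminological: the halves you call ``incoming'' (those giving $-A-B$ directly) are in fact the outgoing traces in the definition \eqref{def of ABCD}, while the boundary condition is substituted on the incoming ones, but the computation you describe is the correct one.
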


\begin{proof}
	In the proof we drop the dependence of $K(\e)$ on $\e$ for notation clarity. 	By the boundary condition (\ref{eq: general cond})  we obtain  
	\begin{align}
	&-\int_\R  \frac{v}{2} h^2(t,1,v) dv = -\int_0^\infty  \frac{v}{2} h^2(t,1,v) dv - \int_{-\infty}^0  \frac{v}{2} \(\kll h(t,1,-v) + \kol h(t,0,v)\)^2  dv\nonumber\\
	=& -(1- \kll^2)\B +  \kol^2\A  -  2\kll\kol\int_{-\infty}^0  \frac{v}{2} h(t,1,-v) h(t,0,v)  dv,\nonumber
	\end{align}
	where we expanded the quadratic term and changed the variable  $-v$ to $v$ within the  term $h(t,1,-v)$. Similarly,  
	\begin{align}
	&\int_\R  \frac{v}{2} h^2(t,0,v) dv = \int_{-\infty}^0 \frac{v}{2} h^2(t,0,v) dv +\int_0^\infty \frac{v}{2}\(\koo h(t,0,-v) +\klo h(t,1,v)\)^2dv\nonumber\\
	=& -(1- \koo^2)\A + \klo^2 \B -  2\koo\klo\int^0_{-\infty}  \frac{v}{2} h(t,1,-v) h(t,0,v)  dv.  \nonumber
	\end{align}
	Writing $v=-\sqrt{v}\sqrt{v}$ we obtain 
	\begin{equation*}
	\begin{aligned}
	&\lv\int_{-\infty}^0  \frac{v}{2} h(t,1,-v) h(t,0,v)  dv\rv\leq \sqrt{\int_{-\infty}^0  \frac{-v}{2} h^2(t,1,-v) dv}  \sqrt{\int_{-\infty}^0  \frac{-v}{2}h^2(t,0,v)  dv } = \sqrt{AB}.
	\end{aligned}
	\end{equation*}
	Combining the previous estimates yields
	\begin{align}
	&-\int_\R  \frac{v}{2} \(h^2(t,1,v) -h^2(t,0,v)\) dv\nonumber\\
	=&-(1- \kll^2 - \klo^2)\B -  (1- \koo^2 - \kol^2)\A\nonumber\\
	& -  2\kll\kol\int_{-\infty}^0  \frac{v}{2} h(t,1,-v) h(t,0,v)  dv +  2\koo\klo\int^{\infty}_0  \frac{v}{2} h(t,1,v) h(t,0,-v)  dv,  \nonumber\\
	\leq &- (1- \koo^2 - \kol^2 )\A-(1- \kll^2 - \klo^2 )\B
	+ 2\(\lv \kll\kol + \koo\klo \rv\) \sqrt{\A\B}\nonumber\\
	=&-  2\koo(1- \koo)\A-2\kll(1- \kll)\B+2\(\lv\kll(1-\koo) +\koo(1-\kll)\rv\)\sqrt{\A\B}, \label{pf: add_1}
	\end{align}
where we have used  relation (\ref{eq: relation_1}). In order to estimate the  term $-\int \frac{v}{2}((\pt_xh)^2(t,1,v,z) - (\pt_xh)^2(t,0,v,z)) dv$ we recall the  boundary condition of $\pt_xh$ in Lemma \ref{lemma: bdy pt_x}:
	\begin{align}
	&-\int_\R  \frac{v}{2} (\pt_xh)^2(t,1,v) dv \nonumber\\
	=& -\int_0^\infty  \frac{v}{2} (\pt_xh)^2(t,1,v) dv - \int_{-\infty}^0  \frac{v}{2} \(-\kll \pt_xh(t,1,-v) + \kol \pt_xh(t,0,v)\)^2  dv\nonumber\\
	=& -(1- \kll^2)\Bx +  \kol^2\Ax  +  2\kll\kol\int_{-\infty}^0  \frac{v}{2} \pt_xh(t,1,-v) \pt_xh(t,0,v)  dv,\nonumber
	\end{align}
	and similarly
	\begin{align}
	&\int_\R  \frac{v}{2} (\pt_xh)^2(t,0,v) dv = -(1- \koo^2)\Ax +  \klo^2 \Bx +  2\koo\klo\int^0_{-\infty}  \frac{v}{2} \pt_xh(t,1,-v) \pt_xh(t,0,v)  dv.  \nonumber
	\end{align}
	The above  inequalities  imply that 
	\begin{equation}
	\label{pf: add_2}
	\begin{aligned}
	&-\int_\R  \frac{v}{2} \((\pt_xh)^2(t,1,v) -(\pt_xh)^2(t,0,v)\) dv\\
	\leq &-  (1- \koo^2 - \kol^2 )\Ax-(1- \kll^2 - \klo^2 )\Bx+2\(\lv\kll\kol + \koo\klo\rv\)\sqrt{\Ax\Bx} \\
	=&-  2\koo(1- \koo)\Ax-2\kll(1- \kll)\Bx+2\(\lv\kll(1-\koo) +\koo(1-\kll)\rv\)\sqrt{\Ax\Bx}.
	\end{aligned}
	\end{equation}
	Finally, we  bound $-u\pt_xu(t,1) + u\pt_x(t,0)$. Use the definition of $u$ in terms of $h$ as 
	\begin{equation*}
	\begin{aligned}
	&-u\pt_xu(t,1)+u\pt_xu(t,0)  \\
	=&-\((1-\kll)\int_0^{\infty} v\sM h(t,1,v)dv + \kol \int_{-\infty}^0 v\sM h(t,0,v)dv\) \\
	&\( (1+\kll)\int_0^{\infty} v\sM \pt_xh(t,1,v)dv + \kol\int_{-\infty}^0v\sM \pt_xh(t,0,v)dv\)\\
	&+\((1-\koo)\int_{-\infty}^0 v\sM h(t,0,v)dv + \klo \int_0^{\infty} v\sM h(t,1,v)dv\) \\
	&\( (1+\koo)\int_{-\infty}^0 v\sM \pt_xh(t,0,v)dv + \klo\int_0^{\infty}v\sM \pt_xh(t,1,v)dv\).
	\end{aligned}
	\end{equation*}
Due to the assumption  $(1-\kll) = \klo  = (1-\koo)$,  therefore, 
	\begin{equation}
	\label{pf: upx_u}
	\begin{aligned}
	&-u\pt_xu(t,1)+u\pt_xu(t,0)  \\
	=&\((1-\kll)\int_0^{\infty} v\sM h(t,1,v)dv + (1-\koo) \int_{-\infty}^0 v\sM h(t,0,v)dv\) \\
	&\(-2\kll\int_0^{\infty} v\sM \pt_xh(t,1,v)dv+2\koo\int_{-\infty}^0v\sM \pt_xh(t,0,v)dv\).
	\end{aligned}
	\end{equation}
	The integral on $v \sqrt{M}h$ is bounded  
	\begin{equation*}
	\begin{aligned}
	&\int_0^{\infty} v\sM h(t,1,v)dv \leq \sqrt{\int_0^{\infty} vM \, dv} \sqrt{\int_0^\infty vh^2(t,1,v)dv} =\sqrt{2\B}.
	\end{aligned}
	\end{equation*} 
	The $x=0$ case is handled similarly. Therefore  (\ref{pf: upx_u}) has an upper bound given by 
	\begin{equation}
	\label{pf: add_3}
	\begin{aligned}
	&-u\pt_xu(t,1)+u\pt_xu(t,0)
	\leq  4\(\lv 1-\kll \rv\sqrt{\B} + \lv 1-\koo\rv\sqrt{\A}\)\(\lv\koo\rv\sqrt{\Ax}+\lv\kll\rv
	\sqrt{\Bx}\).
	\end{aligned}
	\end{equation}
	Combining equations  (\ref{pf: add_1}), (\ref{pf: add_2}) and (\ref{pf: add_3})  we arrive at
	\begin{equation*}
	\begin{aligned}
	&-\int_\R \frac{v}{2}\l[h^2\r]_0^1 dv-\int_\R \frac{v}{2}\l[(\pt_xh)^2\r]_0^1  dv - a\l[u\pt_xu\r]_0^1\\
	\leq &-  2\koo(1- \koo)\(\A+\Ax\)-2\kll(1- \kll)\(\B+\Bx\)\\
	&+2\(\lv\kll(1-\koo)\rv + \lv\koo(1-\kll)\rv\)\(\sqrt{\A\B}+\sqrt{\Ax\Bx}\)\\
	&+4a\(\lv 1-\kll \rv\sqrt{\B} + \lv 1-\koo\rv\sqrt{\A}\)\(\lv\koo\rv\sqrt{\Ax}+\lv\kll\rv
	\sqrt{\Bx}\).
	\end{aligned}
	\end{equation*}
\end{proof}

\section{Conclusion.}
In this paper, we study feedback boundary conditions to guarantee   stabilization of solutions to the Vlasov--Fokker--Planck equation. Using a novel Lyapunov function we are able to show 
that different linear feedback boundary conditions damp out perturbations  of steady states exponentially fast in time.  We discuss boundary conditions for the case where the electric field is suitably small as well as boundary conditions for the potentially large electric fields. Also, we study the interplay of the hydrodynamic limit with the derived feedback laws. In the hydrodynamic limit only periodic or reflective boundary conditions guarantee stabilization of steady states. 

\section*{Acknowledgments.}  
This work has been supported by DFG/HE5386/15,18,19 as well as the DFG funded graduate school GRK2326/20021702.


\medskip
Received xxxx 20xx; revised xxxx 20xx.
\medskip

\end{document}